\theoremstyle{plain}
\newtheorem{thm}{\protect\theoremname}
\newtheorem{lem}[thm]{\protect\lemmaname}
\newtheorem{prop}[thm]{Proposition} 
\newtheorem{cor}[thm]{Corollary}
\theoremstyle{definition}
\newtheorem{rem}[thm]{Remark}
\newcommand{\cA}{\mathcal{A}}
\newcommand{\cX}{\mathcal{X}}
\newcommand{\cF}{\mathcal{F}}
\newcommand{\cL}{\mathcal{L}}
\newcommand{\cP}{\mathcal{P}}
\newcommand{\R}{\mathbb{R}}
\newcommand{\E}{\mathbb{E}}
\newcommand{\abs}[1]{\lvert#1\rvert}
\newcommand{\norm}[1]{\lvert\lvert#1\rvert\rvert}
  \providecommand{\lemmaname}{Lemma}
\providecommand{\theoremname}{Theorem}
\begin{document}

\title{A Dynamic Programming Approach to the Parisi Functional}

\author{Aukosh Jagannath}
\email{aukosh@cims.nyu.edu}
\author{Ian Tobasco}
\email{tobasco@cims.nyu.edu}
\address{Courant Institute of Mathematical Sciences, 251 Mercer St.\ NY, NY, USA, 10012}

\keywords{Parisi formula; Sherrington-Kirkpatrick model; Dynamic Programming}
\subjclass[2010]{Primary: 60K35, 82B44, 82D30, 49N90; Secondary: 35Q82, 35K58, 49S05} 

\begin{abstract}
G.\ Parisi predicted an important variational formula for the thermodynamic limit of the intensive free energy for a class of mean field spin glasses. 
In this paper, we present an elementary approach to the study of the Parisi functional using stochastic dynamic programing and semi-linear PDE. 
We give a derivation of important properties of the Parisi PDE avoiding the use of Ruelle Probability Cascades and Cole-Hopf transformations.
As an application, we give a simple proof of the strict convexity of the Parisi functional, which was recently proved by Auffinger and Chen in
\cite{AuffChen14}.
\end{abstract}
\maketitle

\section{Introduction}

Consider the mixed $p$-spin glass model on the hypercube $\Sigma_N=\{-1,1\}^N$, which is given by the Hamiltonian
\[
 H_N(\sigma)= H_N'(\sigma) + h\sum_i \sigma_i
\]
where $H_N'$ is the centered gaussian process on $\Sigma_N$ with covariance 
\[
\E H'_N(\sigma^1)H'_N(\sigma^2) =N\xi((\sigma^1,\sigma^2)/N).
\]
The parameter $\xi$ satisfies $\xi(t)=\sum_{p\geq 1}\beta_{p}^{2}t^{p}$
where we assume there is a positive $\epsilon$ such that $\xi(1+\epsilon)<\infty$, and $h$ is a non-negative real number.
It was predicted by Parisi \cite{MPV87}, and later proved rigorously by Talagrand \cite{TalPF}, and Panchenko \cite{PanchPF14}, that the thermodynamic
limit of the intensive free energy is given by 
\[
\lim_{N\rightarrow\infty}\frac{1}{N}\log\sum_{\sigma\in\Sigma_N} e^{H_N(\sigma)} = \inf_{\mu\in\Pr[0,1]} \cP(\mu;\xi,h) \quad\text{a.s.}
\]
Here $\Pr([0,1])$ is the space of probability measures on $[0,1]$, and the Parisi functional, $\cP$, is given by
\[
\cP(\mu;\xi,h)=u_{\mu}(0,h)-\frac{1}{2}\int_{0}^{1}\xi^{\prime\prime}(t)\mu[0,t]t\,dt,
\]
where $u_{\mu}$ solves the Parisi PDE:
\[
\begin{cases}
\partial_{t}u_{\mu}(t,x)+\frac{\xi^{''}\left(t\right)}{2}\left(\partial_{xx}u_{\mu}(t,x)+\mu\left[0,t\right]\left(\partial_{x}u_{\mu}(t,x)\right)^{2}\right)=0 & (t,x)\in(0,1)\times\R\\
u_{\mu}(1,x)=\log\cosh(x).
\end{cases}
\]
In the case that $\mu$ has finitely many atoms, the existence of a solution of the Parisi PDE and its regularity properties 
are commonly proved using the Cole-Hopf transformation
and Ruelle Probability Cascades. A continuity argument is then used to extend the definition of $u_\mu$ to general $\mu$ and 
to prove corresponding regularity properties. Such approaches 
do not address the question of uniqueness of solutions.
See \cite{TalBK11Vol2,PanchSKBook,AuffChen13,AuffChen14} for a summary of these results.
 
In this note, we present a different approach. In \prettyref{sec:wellposedness}, we prove the existence, uniquness, and regularity of the Parisi PDE using 
standard arguments from semi-linear parabolic PDEs.
\begin{thm}\label{thm:mainthm}
The Parisi PDE admits a unique weak solution which is continuous,
differentiable in time at continuity points of $\mu$, and smooth in space. 
\end{thm}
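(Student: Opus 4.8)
The plan is to identify $u_\mu$ with the value function of a stochastic optimal control problem whose dynamic programming (Hamilton--Jacobi--Bellman) equation is precisely the Parisi PDE, and then to extract existence, regularity and uniqueness from this representation together with classical linear parabolic theory. Let $W$ be a standard Brownian motion; for progressively measurable controls $\alpha$ set
\[
u_\mu(t,x)=\sup_{\alpha}\E\!\left[\log\cosh\!\Big(x+\int_t^1\xi''(s)\,\mu[0,s]\,\alpha_s\,ds+\int_t^1\sqrt{\xi''(s)}\,dW_s\Big)-\frac12\int_t^1\xi''(s)\,\mu[0,s]\,\alpha_s^2\,ds\right].
\]
Maximizing the Hamiltonian pointwise returns the optimizer $\alpha^\star=\partial_xu_\mu$ and the Parisi PDE, so this is the natural candidate. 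First I would check the supremum is finite: since $\log\cosh$ is $1$--Lipschitz and $\xi''$ is bounded on $[0,1]$ (as $\xi(1+\e)<\infty$), the pointwise bound $\xi''(s)\mu[0,s]\big(|\alpha_s|-\tfrac12\alpha_s^2\big)\le\tfrac12\|\xi''\|_{L^\infty[0,1]}$ together with $\E\int_t^1\sqrt{\xi''}\,dW=0$ gives $0\le u_\mu(t,x)\le|x|+\tfrac12\|\xi''\|_{L^\infty[0,1]}<\infty$. Because the noise and the running cost do not depend on the initial point, comparing the representations at $x$ and $x'$ and using the Lipschitz bound for $\log\cosh$ yields $|u_\mu(t,x)-u_\mu(t,x')|\le|x-x'|$; a similar comparison of the two time horizons, using only $\mu[0,\cdot]\le1$, gives local Lipschitz continuity in $t$. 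Thus $u_\mu$ is continuous with $\|\partial_xu_\mu\|_{L^\infty}\le1$.

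Next I would prove the dynamic programming principle for $u_\mu$ in the standard way (the flow property of the controlled diffusion plus a measurable selection) and deduce that $u_\mu$ is a viscosity solution of the Parisi PDE on $(0,1)\times\R$. For the spatial regularity, note that unless $\xi$ is affine one has $\xi''(t)>0$ for all $t>0$ (a nonzero power series with nonnegative coefficients), so on each compact subset of $(0,1)\times\R$ the equation is uniformly parabolic with bounded coefficients; the crucial point is that the only time-dependence enters through $t\mapsto\mu[0,t]$, which is bounded and measurable, and interior $L^p$ and Schauder estimates \emph{in the $x$-variable} tolerate merely measurable dependence in $t$. Using $\|\partial_xu_\mu\|_{L^\infty}\le1$ to handle the quadratic term, a bootstrap in these estimates shows $\partial_x^ku_\mu\in L^\infty_{\mathrm{loc}}$ for every $k$, so $u_\mu$ is a classical solution that is $C^\infty$ in $x$ (the affine case being trivial). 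With $\partial_{xx}u_\mu$ available, the equation reads
\[
\partial_tu_\mu(t,x)=-\frac{\xi''(t)}{2}\Big(\partial_{xx}u_\mu(t,x)+\mu[0,t]\,(\partial_xu_\mu(t,x))^2\Big),
\]
whose right-hand side is continuous in $t$ at every continuity point of $\mu$ (the other factors being continuous in $t$ by the interior estimates); combined with local Lipschitz continuity in $t$, this yields genuine differentiability of $u_\mu$ in time at continuity points of $\mu$.

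Finally, for uniqueness, let $u_1,u_2$ be two weak solutions in the sense made precise in \prettyref{sec:wellposedness}, so in particular each $\partial_xu_i\in L^\infty$. With $w=u_1-u_2$ and the identity $(\partial_xu_1)^2-(\partial_xu_2)^2=(\partial_xu_1+\partial_xu_2)\,\partial_xw$, the function $w$ solves the \emph{linear} backward parabolic equation
\[
\partial_tw+\frac{\xi''(t)}{2}\partial_{xx}w+\frac{\xi''(t)}{2}\mu[0,t]\,(\partial_xu_1+\partial_xu_2)\,\partial_xw=0,\qquad w(1,\cdot)=0,
\]
whose coefficients are bounded and measurable; an energy estimate together with Gr\"onwall (or the maximum principle, using the linear growth of $w$ in $x$) forces $w\equiv0$. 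The main obstacle throughout is that $t\mapsto\mu[0,t]$ is only monotone and right-continuous, so no classical time-regularity can be expected; the resolution is that this irregularity is invisible to the parabolic estimates in $x$ and is exactly matched by the claim that $u_\mu$ is differentiable in $t$ only at continuity points of $\mu$, while the quadratic nonlinearity is kept under control at every stage by the structural bound $\|\partial_xu_\mu\|_{L^\infty}\le1$ inherited from the $1$--Lipschitz terminal datum $\log\cosh$.
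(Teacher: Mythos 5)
Your proposal inverts the logical order of the paper. The paper proves existence, uniqueness, and regularity by purely PDE-analytic means: a time change renders the equation uniformly parabolic, Duhamel's principle converts it into the fixed point equation \eqref{eq:FP}, a contraction-mapping argument in an explicit Banach space gives short-time existence, an a priori maximum principle and energy/Gr\"onwall estimate give global existence, and an iterative bootstrap on the Duhamel formula gives global smoothness in $x$. Only \emph{after} this is the value-function/control representation derived (Lemma \ref{lem:rep-pde}), as a consequence of the regularity already in hand, via It\^o's lemma and the verification argument. You instead take the value function as the \emph{definition} of the candidate solution and propose to prove it solves the PDE via the dynamic programming principle and viscosity theory, then bootstrap spatial regularity via interior parabolic estimates, and prove uniqueness by linearizing. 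This is a coherent and genuinely different strategy; it buys a conceptually clean starting object and makes the convexity structure visible from the outset, whereas the paper's fixed-point route buys global (not merely interior) bounds and avoids both the measurable-selection machinery behind the DPP and the viscosity-solution comparison theory.

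Several steps, however, are asserted at a level of generality that does not quite hold as stated and would need real work to repair. First, the class of controls must be restricted to \emph{bounded} progressively measurable processes for the controlled SDE and the objective to be well-defined a priori; the unrestricted supremum you write is formal until one shows (as the paper does only \emph{after} establishing $\abs{u_x}\le1$) that the optimizer is bounded. Second, "interior Schauder estimates in the $x$-variable tolerate merely measurable dependence in $t$" is not true in the classical Schauder form; what is true is that parabolic $L^p$ theory tolerates measurable time coefficients, and one must iterate $W^{2,1}_{p,\mathrm{loc}}$ together with Sobolev embedding (or, equivalently, iterate the Duhamel formula as the paper does, which is where your bootstrap would in fact land). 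Third, your local-Lipschitz-in-$t$ argument for the value function implicitly uses a uniform bound on the running cost, which requires restricting to bounded controls; it must be disentangled from the circularity of needing the optimal control to be bounded. Fourth, your uniqueness step applies an energy estimate or a Phragm\'en--Lindel\"of maximum principle to the weak formulation of the linearized equation for $w=u_1-u_2$ under only the hypothesis $\partial_x u_i\in L^\infty$; this is not immediate because $w$ grows linearly and no control on $\partial_{xx}w$ is assumed. The paper avoids exactly this difficulty by passing through Duhamel's principle (Proposition \ref{prop:duhamel}), working with the integral equation, and iterating a first-derivative-only contractive estimate (Lemma \ref{lem:uniqueness}). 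Your argument would need to either replicate that maneuver or justify the cutoff-and-Gr\"onwall computation in the stated low-regularity class. None of these gaps is fatal, but each is a nontrivial missing lemma rather than a routine citation, and collectively they are where the paper's more elementary route earns its keep.
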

\noindent See \prettyref{sec:wellposedness} for the precise statement of this result,
and in particular for the definition of weak solution.
Due to the non-linearity of the Parisi PDE, low regularity of the coefficients, loss of uniform ellipticity at $t=0$, 
and unboundedness of the initial data, the proof of \prettyref{thm:mainthm}
requires the careful application of many different (though relatively standard) arguments in tandem.

The presentation of a PDE driven approach to the study of this functional is not only of interest to experts 
in the field of spin glasses, but may also be of interest to practitioners of 
the Calculus of Variations, PDEs, and Stochastic Optimal Control. 
There are many important, purely analytical questions surrounding this functional that 
must be addressed before further progress on questions in spin glasses can be made. 
See \cite{TalBK11Vol2,TalBK11,TalPM06} for a discussion.
Some of these questions are thought to be intractable to the methods currently used in 
the spin glass literature but appear to be well-suited to the techniques of the aforementioned fields; 
as such it is important to present the study of this functional in a language that is both basic and palatable to their practitioners. 

Besides its intrinsic interest, the preceding theorem has useful applications to the study of the Parisi functional. 
After proving the existence of a sufficiently regular solution to the above PDE,
we can use elementary arguments from stochastic analysis to prove many
important and basic properties of this functional, such as fine estimates on the solution of the Parisi PDE and the strict
convexity of the Parisi functional itself.

As a first application of this type, we further develop the well-posedness theory of the Parisi PDE by quantitatively proving the continuity of the solution in the measure $\mu$.
We also prove sharp bounds on some of the derivatives of the solution. Such bounds
are important to the proofs of many important results regarding the Parisi functional, see for example Talagrand's proof of the Parisi formula in \cite{TalBK11Vol2} and also \cite{AuffChen13,AuffChen14,TalPM06}. They were 
previously proved using manipulations of the Cole-Hopf transformation and Ruelle Probability Cascades \cite{TalBK11Vol2}. 
This is presented in \prettyref{sec:cts-dep}.

As a further demonstration how \prettyref{thm:mainthm} can be combined with methods from stochastic optimal control,
we present a simple proof of the strict convexity of the Parisi functional.
As background, recall the prediction by Parisi \cite{MPV87} that the minimizer of the Parisi functional should be unique and
should serve the role of the order parameter in these systems. 
The question of the strict convexity of $\cP$ was first posed by Panchenko in \cite{PanchConv05} 
as a way to prove this uniqueness. It was studied by Panchenko \cite{PanchConv05}, Talagrand \cite{TalPM06,TalPF}, Bovier and Klimovsky \cite{BovKlim08},
and Chen \cite{Chen13}, and finally resolved by Auffinger and Chen in their fundamental work \cite{AuffChen14}. 
The work of Auffinger and Chen rested on a variational representation
of the log-moment generating functional of Brownian motion \cite{BD98,Flem78}, which they combine with approximation arguments to give a variational representation for the solution of the Parisi PDE.
We note here that an early version of this variational representation appeared in \cite{BovKlim08},
where it is shown, using the theory of viscosity solutions, to hold when the coefficient $\mu[0,t]$ is piecewise continuous with finitely many jumps.

Since the Parisi PDE is a Hamilton-Jacobi-Bellman equation, it is natural to obtain the desired variational representation for its solution
as an application of the dynamic programming principle from stochastic optimal control theory.
The required arguments are elementary, and are commonly used in studying nonlinear parabolic PDEs of the type seen above. 
We prove the variational representation in \prettyref{sec:rep-form}, 
and then deduce from it the strict convexity of the Parisi functional in \prettyref{sec:strict-conv}.

\begin{thm}
The functional $\cP(\mu;\xi,h)$ is strictly convex for all
choices of $\xi$ and $h$. 
\end{thm}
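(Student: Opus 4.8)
The plan is to extract strict convexity from the stochastic control representation of $u_{\mu}(0,h)$ established in \prettyref{sec:rep-form}, which I write in the form
\[
u_{\mu}(0,h)=\sup_{\alpha}J_{\mu}(\alpha),\qquad
J_{\mu}(\alpha)=\E\Big[\log\cosh\big(X^{\mu,\alpha}_{1}\big)-\tfrac{1}{2}\int_{0}^{1}\xi''(s)\,\mu[0,s]\,\alpha_{s}^{2}\,ds\Big],
\]
the supremum being over adapted controls $\alpha$, with $X^{\mu,\alpha}$ solving $dX_{s}=\xi''(s)\,\mu[0,s]\,\alpha_{s}\,ds+\sqrt{\xi''(s)}\,dW_{s}$, $X^{\mu,\alpha}_{0}=h$, and attained at the feedback control $\alpha^{*}_{s}=\partial_{x}u_{\mu}(s,X^{*}_{s})$, $X^{*}=X^{\mu,\alpha^{*}}$. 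Since the term $\tfrac{1}{2}\int_{0}^{1}\xi''(t)\,\mu[0,t]\,t\,dt$ in $\cP$ is affine in $\mu$, it suffices to prove $\mu\mapsto u_{\mu}(0,h)$ strictly convex. Convexity is immediate: for fixed $\alpha$, $J_{\mu}(\alpha)$ is the expectation of the convex function $\log\cosh$ composed with a map affine in $\mu$, minus a map linear in $\mu$, hence convex in $\mu$, and a supremum of convex functions is convex. (If $\xi$ is affine then $\cP$ is constant and there is nothing to prove, so I assume $\xi''\not\equiv0$, i.e.\ $\xi''>0$ on $(0,1]$, which is part of nondegeneracy of the model.)

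For strictness I argue by contradiction. Suppose $u_{\mu_{\lambda}}(0,h)=(1-\lambda)u_{\mu_{0}}(0,h)+\lambda u_{\mu_{1}}(0,h)$ for some distinct $\mu_{0},\mu_{1}\in\Pr([0,1])$ and $\lambda\in(0,1)$, with $\mu_{\lambda}=(1-\lambda)\mu_{0}+\lambda\mu_{1}$. Let $\alpha^{*}$ attain the supremum for $\mu_{\lambda}$; it is bounded (as $|\partial_{x}u|\le1$), hence admissible for every measure. Inserting $\alpha^{*}$ into the representations for $\mu_{0}$ and $\mu_{1}$ gives $u_{\mu_{i}}(0,h)\ge J_{\mu_{i}}(\alpha^{*})$, and together with convexity of $\mu\mapsto J_{\mu}(\alpha^{*})$ this pinches
\[
u_{\mu_{\lambda}}(0,h)=J_{\mu_{\lambda}}(\alpha^{*})\le(1-\lambda)J_{\mu_{0}}(\alpha^{*})+\lambda J_{\mu_{1}}(\alpha^{*})\le(1-\lambda)u_{\mu_{0}}(0,h)+\lambda u_{\mu_{1}}(0,h)=u_{\mu_{\lambda}}(0,h)
\]
into equalities, so $J_{\mu_{\lambda}}(\alpha^{*})=(1-\lambda)J_{\mu_{0}}(\alpha^{*})+\lambda J_{\mu_{1}}(\alpha^{*})$. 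The penalty terms are linear in $\mu$ and cancel; writing $a_{i}:=h+\int_{0}^{1}\xi''(s)\,\mu_{i}[0,s]\,\alpha^{*}_{s}\,ds+\int_{0}^{1}\sqrt{\xi''(s)}\,dW_{s}$ (so that $X^{\mu_{\lambda},\alpha^{*}}_{1}=(1-\lambda)a_{0}+\lambda a_{1}$), what remains is $\E\big[(1-\lambda)\log\cosh a_{0}+\lambda\log\cosh a_{1}-\log\cosh\!\big((1-\lambda)a_{0}+\lambda a_{1}\big)\big]=0$. As $\log\cosh$ is \emph{strictly} convex, the integrand is nonnegative and vanishes only where $a_{0}=a_{1}$; hence $a_{0}=a_{1}$ a.s., that is,
\[
\int_{0}^{1}\xi''(s)\,\big(\mu_{0}[0,s]-\mu_{1}[0,s]\big)\,\alpha^{*}_{s}\,ds=0\qquad\text{a.s.}
\]

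It remains to deduce $\mu_{0}=\mu_{1}$ from this identity, which I regard as the heart of the matter. By \prettyref{thm:mainthm} and the derivative bounds of \prettyref{sec:cts-dep}, $\partial_{x}u_{\mu_{\lambda}}(s,\cdot)$ is smooth and, since $\partial_{xx}u_{\mu_{\lambda}}>0$ (a maximum principle applied to the linear parabolic equation solved by $\partial_{xx}u_{\mu_{\lambda}}$, whose terminal datum $(\log\cosh)''$ is positive), a strictly increasing homeomorphism of $\R$ onto $(-1,1)$. Hence $s\mapsto\alpha^{*}_{s}=\partial_{x}u_{\mu_{\lambda}}(s,X^{*}_{s})$ is, on $(0,1]$, a nondegenerate one-dimensional diffusion: its diffusion coefficient $\partial_{xx}u_{\mu_{\lambda}}(s,X^{*}_{s})\sqrt{\xi''(s)}$ is strictly positive. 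Because the diffusion coefficient $\sqrt{\xi''}$ of $X^{*}$ does not depend on the state, $X^{*}$ is a continuous image of the Gaussian martingale $\int_{0}^{\cdot}\sqrt{\xi''(s)}\,dW_{s}$, whose support in $C[0,1]$ is $\{b:b(0)=0\}$; pushing this through the continuous maps $b\mapsto X^{*}\mapsto\partial_{x}u_{\mu_{\lambda}}(\cdot,X^{*}_{\cdot})$ shows that the law of $\alpha^{*}$ has full support in $\{z\in C[0,1]:z(0)=\partial_{x}u_{\mu_{\lambda}}(0,h),\ z(s)\in(-1,1)\ \forall s\}$. The identity, holding a.s., must then hold for every $z$ in this support, so $s\mapsto\xi''(s)(\mu_{0}[0,s]-\mu_{1}[0,s])$ is $L^{2}$-orthogonal to every difference $z-z'$ of such paths; these differences span a dense subspace of $L^{2}[0,1]$ (the support contains the constant path and all its small smooth perturbations that vanish at $0$), so $\xi''(s)(\mu_{0}[0,s]-\mu_{1}[0,s])=0$ a.e. Since $\xi''>0$ on $(0,1)$ and $\mu_{0}[0,\cdot],\mu_{1}[0,\cdot]$ are right-continuous with total mass $1$, this gives $\mu_{0}=\mu_{1}$, the contradiction.

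The main obstacle is this last step, and within it the genuine difficulty is the possible sign changes of $\mu_{0}[0,\cdot]-\mu_{1}[0,\cdot]$: a single expectation of the displayed identity yields nothing, so one really must use that the optimal control process is sufficiently \emph{spread out} --- this is exactly the support statement above, and is where nondegeneracy of $\xi''$ and strict positivity of $\partial_{xx}u_{\mu_{\lambda}}$ are indispensable. A minor technical point is the loss of ellipticity at $t=0$, where $\xi''$ may vanish; it does not affect the Gaussian-support computation, and in any event can be avoided by running the argument on $[\delta,1]$ after conditioning on $\cF_{\delta}$ and letting $\delta\downarrow0$.
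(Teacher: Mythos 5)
Your proof is correct and reaches the same conclusion, but the crux of the argument — ruling out equality — is handled by a genuinely different device than the paper's. You and the paper take the same first steps: reduce to strict convexity of $\mu\mapsto u_\mu(0,h)$, write $X^{\mu_\theta,\alpha^*}_1$ as a convex combination $\theta Y_1 + (1-\theta)Z_1$ using the decoupled drifts, and exploit the strict convexity of $\log\cosh$. The decisive lemma in both cases is that $Y_1\ne Z_1$ with positive probability, i.e., that $\int_0^1 \xi''(s)(\mu_0[0,s]-\mu_1[0,s])\alpha^*_s\,ds$ does not vanish identically. The paper's route is to compute $\mathrm{Var}(Y_1-Z_1)$: by \prettyref{thm:PDE-existence-reg} and \prettyref{prop:low-reg-ito} the optimal control $\alpha^*_s=\partial_x u(s,X_s)$ is a martingale with quadratic variation $p(s)=\int_0^s\xi''\E u_{xx}^2$, which is strictly increasing by \prettyref{lem:maxprinciples}; hence $\mathrm{Var}(Y_1-Z_1)=\int\!\!\int \Delta_s\Delta_t\,p(s\wedge t)\,ds\,dt$ for $\Delta=\xi''(\mu_0-\mu_1)$, and $p(s\wedge t)$ is the covariance of a nondegenerate time-changed Brownian motion, hence a strictly positive definite kernel. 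Your route instead argues by contradiction, derives $a_0=a_1$ a.s., and then invokes a support theorem: using that the diffusion coefficient of $X^*$ is state-independent (so, e.g., via Girsanov, $\mathrm{supp}(X^*)=\{\phi:\phi(0)=h\}$ once $\xi''>0$ on $(0,1]$) and that $\partial_x u(s,\cdot)$ is a homeomorphism onto $(-1,1)$ (again by $\partial_{xx}u>0$ from \prettyref{lem:maxprinciples}), you conclude the law of $\alpha^*$ has full support in $\{z: z(0)=\partial_x u(0,h),\,z(s)\in(-1,1)\}$, and a continuous linear functional vanishing on such a rich set must have vanishing kernel. Both routes rest on the same nondegeneracy facts ($\partial_{xx}u>0$ and $\xi''\not\equiv0$), but the paper's is more self-contained (an explicit covariance computation and an elementary positive-definiteness argument), while yours imports support-theorem machinery; in exchange, your version makes the obstruction to equality perhaps more visible. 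One stylistic note: the paper phrases the conclusion as a direct strict inequality rather than a contradiction, which avoids having to argue that an a.s.\ identity transfers to the topological support (a step you treat correctly, but which costs a few lines); and it does not explicitly flag the degenerate case $\xi''\equiv0$ even though its lemma silently requires $\xi''\not\equiv0$ — you are right to make that hypothesis explicit.
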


The variational representation which was discussed above is given in \prettyref{lem:rep-pde}. 
From this it follows immediately that one has the following representation for the Parisi Formula.
 
\begin{prop}
The Parisi Formula has the representation
\begin{align*}
\lim_{N\rightarrow\infty}\frac{1}{N}\log\sum_{\sigma\in\Sigma_N} e^{H_N(\sigma)} 
= \inf_{\mu\in\Pr([0,1])}\sup_{\alpha\in\cA_{0}}\E&\left[
\log\cosh\left(\int_{0}^{1}\xi''(s)\mu[0,s]\alpha_sds+\int_0^t\sqrt{\xi''(s)}dW_s+h\right)\right.\\
&\left.-\frac{1}{2}\int_{0}^{1}\xi''(s)\mu[0,s]\left(\alpha^{2}_s+s\right)ds\right]
\end{align*}
where $\cA_0$ consists of all bounded processes on $[0,1]$ that are progressively measurable with respect to the filtration of Brownian motion.
\end{prop}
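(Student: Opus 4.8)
\emph{Proof proposal.} The plan is to obtain this as an immediate corollary of the Parisi formula together with the variational representation of \prettyref{lem:rep-pde}. By the Parisi formula,
\[
\lim_{N\to\infty}\frac{1}{N}\log\sum_{\sigma\in\Sigma_N}e^{H_N(\sigma)}=\inf_{\mu\in\Pr([0,1])}\cP(\mu;\xi,h)\qquad\text{a.s.},
\]
and $\cP(\mu;\xi,h)=u_\mu(0,h)-\frac{1}{2}\int_0^1\xi''(t)\mu[0,t]\,t\,dt$ by definition, so it suffices to rewrite $u_\mu(0,h)$ for each fixed $\mu$ in the asserted form and then take the infimum over $\mu$.

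For the first step I would specialize \prettyref{lem:rep-pde} --- which represents $u_\mu(t,x)$ as the supremum over $\alpha\in\cA_0$ of an expectation of the form $\log\cosh(\,\cdot\,)-\frac{1}{2}\int_t^1\xi''(s)\mu[0,s]\alpha_s^2\,ds$ --- to the point $(t,x)=(0,h)$; this already produces the $\log\cosh$ term and the penalty $\frac{1}{2}\int_0^1\xi''(s)\mu[0,s]\alpha_s^2\,ds$ appearing in the statement. It then remains only to absorb the deterministic term $-\frac{1}{2}\int_0^1\xi''(t)\mu[0,t]\,t\,dt$: since it is independent of $\alpha$, it may be pulled inside the supremum and then inside the expectation, where it combines with the running cost to give $-\frac{1}{2}\int_0^1\xi''(s)\mu[0,s]\bigl(\alpha_s^2+s\bigr)\,ds$. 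Taking $\inf_{\mu\in\Pr([0,1])}$ of the resulting identity yields the claim; note that the order $\inf_\mu\sup_\alpha$ is preserved throughout, so no minimax exchange is needed.

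There is essentially no genuine obstacle here beyond bookkeeping, as all the analytic content sits in \prettyref{thm:mainthm} and \prettyref{lem:rep-pde}. The two points worth verifying are that the admissible class $\cA_0$ coincides exactly with the one used in \prettyref{lem:rep-pde}, and that each expectation in sight is finite, so that inserting and rearranging the deterministic constant is legitimate; the latter follows from the regularity of $u_\mu$ and the a priori bounds on its derivatives proved in the earlier sections, which control the growth of $\log\cosh$ along the controlled process (recall also that $\alpha$ is bounded by definition of $\cA_0$). With these in hand the proof is immediate.
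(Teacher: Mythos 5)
Your proposal is correct and takes exactly the same route as the paper, which disposes of the proposition with the single sentence ``Applying this representation to the Parisi formula gives Proposition 3'' at the end of Section~\ref{sec:rep-form}; you have simply made the bookkeeping explicit (specializing \prettyref{lem:rep-pde} at $(t,x)=(0,h)$, unwinding $X_1^\alpha$, and absorbing the deterministic $-\frac{1}{2}\int_0^1\xi''(s)\mu[0,s]\,s\,ds$ into the expectation). One small remark: the upper limit $t$ in the stochastic integral in the proposition's statement is a typo in the paper for $1$, which your rewriting of $X_1^\alpha$ implicitly corrects.
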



\section*{Acknowledgements}
We would like to thank Antonio Auffinger for encouraging the preparation of this paper, and asking A.J.\ if one could prove strict convexity using dynamic programming techniques. We would also like to thank Anton Klimovsky for bringing our attention
to \cite{BovKlim08}. Finally, we would like to thank 
our advisors G.\ Ben Arous and R.V.\ Kohn for their support.
This research was conducted while A.J.\ was supported by an NSF Graduate Research Fellowship DGE-0813964, 
NSF Grant DMS-1209165, and NSF grant OISE-0730136, and while I.T.\ was supported by an NSF Graduate Research Fellowship
DGE-0813964, NSF Grant OISE-0967140, and NSF Grant DMS-1311833. 

\section{Well-posedness of the Parisi PDE} \label{sec:wellposedness}


Let $u:\left[0,1\right]\times\R\to\R$ be a continuous function with
essentially bounded weak derivative $\partial_{x}u$. We call $u$
a \emph{weak solution} of the Parisi PDE if it satisfies 

\[
0=\int_{0}^{1}\int_{\R}-u\partial_{t}\phi+\frac{\xi''\left(t\right)}{2}\left(u\partial_{xx}\phi+\mu\left[0,t\right]\left(\partial_{x}u\right)^{2}\phi\right)\, dxdt+\int_{\R}\phi\left(1,x\right)\log\cosh x\, dx
\]
for every $\phi\in C_{c}^{\infty}\left((0,1]\times\R\right).$  
We now state the precise version of \prettyref{thm:mainthm} from the introduction.
\begin{thm}\label{thm:PDE-existence-reg}
There exists a unique weak solution $u$ to the Parisi PDE. The solution
$u$ has higher regularity:
\begin{itemize}
\item $\partial_{x}^{j}u\in C_{b}\left(\left[0,1\right]\times\R\right)$
for $j\geq1$
\item $\partial_{t}\partial_{x}^{j}u\in L^{\infty}\left(\left[0,1\right]\times\R\right)$
for $j\geq0$. 
\end{itemize}
For all $j\geq1$, the derivative $\partial_{x}^{j}u$ is a weak solution
to
\[
\begin{cases}
\partial_{t}\partial_{x}^{j}u+\frac{\xi''\left(t\right)}{2}\left(\partial_{xx}\partial_{x}^{j}u+\mu\left[0,t\right]\partial_{x}^{j}\left(\partial_{x}u\right)^{2}\right)=0 & \left(t,x\right)\in\left(0,1\right)\times\R\\
\partial_{x}^{j}u\left(1,x\right)=\frac{d^{j}}{dx^{j}}\log\cosh x & x\in\R
\end{cases}.
\]

\end{thm}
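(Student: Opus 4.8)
The plan is to construct the solution by first smoothing the measure $\mu$, solving the resulting regularized PDEs, and then passing to the limit using the estimates from stochastic analysis. Concretely, I would first approximate $\mu$ by measures $\mu_n$ whose distribution functions $\mu_n[0,t]$ are smooth (or at least piecewise linear), with $\mu_n[0,t]\to\mu[0,t]$ in $L^1([0,1])$. For such regular coefficients the Parisi PDE is a uniformly parabolic (away from $t=0$) semilinear equation of Hamilton-Jacobi-Bellman type, and one obtains a classical solution $u_n$ either by a fixed-point argument in a suitable Hölder or Sobolev space, or — cleaner for later use — by the Cole-Hopf substitution $v_n=e^{\mu_n[0,t]\,u_n}$ on each interval where $\mu_n[0,t]$ is constant, which linearizes the equation to the backward heat equation. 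Either way one gets existence, and the comparison principle for this quasilinear equation gives uniqueness in the regularized setting.

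The heart of the argument is the derivation of a priori estimates uniform in $n$. Here I would exploit the HJB structure: since $\log\cosh x$ has $|\tfrac{d}{dx}\log\cosh x|\le 1$ and $0\le\tfrac{d^2}{dx^2}\log\cosh x\le 1$, the comparison principle should propagate bounds of the form $|\partial_x u_n|\le 1$ and $0\le\partial_{xx}u_n\le 1$ backward in time. The first follows because $\partial_x u_n$ itself solves the linearized equation (the $j=1$ case of the stated PDE), which is linear parabolic with drift $\xi''\mu_n[0,t]\partial_x u_n$, and its terminal data is bounded by $1$; the maximum principle then gives the bound on $[0,1]\times\R$, uniformly in $n$. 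Differentiating once more, $\partial_{xx}u_n$ solves a linear parabolic equation with a zeroth-order coefficient of a favorable sign, giving the bound $0\le\partial_{xx}u_n\le 1$; and inductively one controls all $\partial_x^j u_n$ on $[0,1]\times\R$ uniformly in $n$, since each higher derivative solves a linear parabolic equation whose source term is a polynomial in the lower derivatives (the ones already bounded). With $\partial_x u_n$ bounded, the PDE itself bounds $\partial_t u_n$ in $L^\infty$ on any strip $[\delta,1]\times\R$ where $\xi''$ is bounded, and in fact on all of $[0,1]\times\R$ since $\xi''$ is bounded on $[0,1]$. These bounds plus interior parabolic (Schauder) estimates on strips $[\delta,1-\delta]\times\R$ give local $C^\infty$-in-space, $C^{1}$-in-time compactness.

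Given the uniform estimates, I would extract a subsequential limit $u_n\to u$ locally uniformly together with all space derivatives, and verify that $u$ is a weak solution: the nonlinear term $\mu_n[0,t](\partial_x u_n)^2\phi$ converges because $\partial_x u_n\to\partial_x u$ locally uniformly and $\mu_n[0,t]\to\mu[0,t]$ in $L^1$ (dominated convergence handles the product), while the linear terms pass to the limit trivially. The regularity claims for $u$ — $\partial_x^j u\in C_b([0,1]\times\R)$, $\partial_t\partial_x^j u\in L^\infty$, and the fact that $\partial_x^j u$ is itself a weak solution of the differentiated equation — follow by lower semicontinuity of the sup norms under the limit and by passing to the limit in the weak formulation of each differentiated equation (which the $u_n$ satisfy classically). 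Finally, uniqueness of the weak solution: if $u$ and $\tilde u$ are two weak solutions, then $w=u-\tilde u$ satisfies a linear equation of the form $\partial_t w+\tfrac{\xi''}{2}(\partial_{xx}w+\mu[0,t]\,b\,\partial_x w)=0$ with $b=\partial_x u+\partial_x \tilde u$ bounded, and $w(1,\cdot)=0$; an energy estimate (multiply by $w$, integrate, use a Gronwall argument, handling the $t=0$ degeneracy by noting $\mu[0,t]\ge 0$ and $\xi''\ge 0$) forces $w\equiv 0$.

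The main obstacle I expect is handling the degeneracy at $t=0$, where $\xi''(0)=2\beta_1^2$ may vanish (if $\beta_1=0$) so the equation loses uniform ellipticity, combined with the low regularity of $t\mapsto\mu[0,t]$ — which is why differentiability in time can only be asserted at continuity points of $\mu$. The fix is to do all the interior Schauder work on strips bounded away from $t=0$ and to obtain the bounds up to $t=0$ only through the maximum-principle arguments above, which do not require ellipticity, plus the continuity of $u$ up to $t=0$ via barrier/comparison arguments against the heat flow with diffusivity $\int_t^1\xi''$. A secondary technical point is the unboundedness of the initial data $\log\cosh x\sim|x|$: this is benign because its derivatives are bounded, so one works with $\partial_x u$ from the start and recovers $u$ by integration, controlling its growth to be at most linear.
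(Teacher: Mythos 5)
Your proposal takes a genuinely different route from the paper. The paper explicitly avoids the regularize-$\mu$-and-pass-to-the-limit strategy (which it characterizes as the standard one), and instead performs a time change, rewrites the PDE via Duhamel's principle as the fixed-point equation $u(t)=e^{t\Delta}g+\int_0^t e^{(t-s)\Delta}m(s)u_x^2(s)\,ds$, and attacks that equation directly: short-time existence by a contraction mapping in a Banach space built from $\norm{\cdot}_\infty$ and $L^2$ norms of $u_x,u_{xx}$; global existence by the a priori bounds $\norm{u_x}_\infty\le\norm{g'}_\infty$ (maximum principle) and an energy/Gronwall bound on $\norm{u_{xx}}_{L^\infty_tL^2_x}$; higher regularity by parabolic bootstrapping inside the Duhamel formula; and uniqueness by an $L^\infty$ contractive iteration on $d_x=u_x-v_x$ using the $\norm{\partial_xe^{t\Delta}}_{L^\infty\to L^\infty}\lesssim t^{-1/2}$ estimate. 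This buys a self-contained treatment that handles the low regularity of $t\mapsto\mu[0,t]$ with no approximation of $\mu$ at all, and makes existence and uniqueness live in the same function space. Your route can in principle be made to work, and it does address uniqueness (which the paper notes the classical existence arguments do not), but I see two genuine soft spots.

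First, the claim that $0\le\partial_{xx}u_n\le 1$ follows from a comparison principle because the zeroth-order coefficient has ``a favorable sign'' is not correct as stated: differentiating twice gives, backward in time, a source term $+\xi''\mu[0,t]u_{xx}^2\ge 0$, which is the \emph{wrong} sign for propagating an upper bound. The lower bound $u_{xx}\ge 0$ is fine (nonnegative source with nonnegative terminal data), but $u_{xx}\le 1$ requires something extra — the paper obtains it from a Feynman--Kac representation plus an algebraic rearrangement (\prettyref{lem:maxprinciples}). Fortunately the sharp constant is not needed for the theorem; a crude uniform bound on $\partial_x^j u_n$ from the inductive bootstrap suffices for compactness, so this is a local error rather than a structural one. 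Second, the uniqueness argument (``multiply by $w$, integrate, Gronwall'') has an integrability gap: $w=u-\tilde u$ has bounded $x$-gradient and vanishes at $t=1$, but that does not put $w(t,\cdot)\in L^2(\R)$ for $t<1$, and the energy identity requires integrability or a cutoff argument that you have not sketched. The paper's uniqueness proof sidesteps this by estimating $\norm{d_x}_{L^\infty}$ directly through the Duhamel formula and iterating on short time intervals, which needs no decay of $w$ at infinity. Finally, the appeal to interior Schauder estimates ``on strips'' is imprecise as a source of uniform-in-$n$ compactness, because the Hölder-in-$t$ norm of $\mu_n[0,t]$ blows up near atoms of $\mu$; the compactness you actually have comes from the uniform $L^\infty$ bound on $\partial_t u_n$ supplied by the PDE once all $\partial_x^j u_n$ are uniformly bounded.
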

\begin{rem}
The solution described in \cite{AuffChen13} can be shown to be a weak solution of the Parisi PDE, 
 using the approximation methods developed there. It was also shown in \cite{AuffChen13} that this solution has the higher
 regularity described above. 
 \end{rem}
 \begin{rem}
  The reader may notice that the essential boundedness of $\partial_xu$ is not strictly necessary 
  to make sense of the definition of weak solutions. It is used in the proof of uniqueness in an essential way, however 
  we do not claim that this proof is optimal by any means. 
   \end{rem}

\noindent Continous dependence is proved in \prettyref{sec:cts-dep}.

We begin the proof of \prettyref{thm:PDE-existence-reg}. After performing the time change $t\to s\left(t\right)=\frac{1}{2}\left(\xi'\left(1\right)-\xi'\left(t\right)\right)$
and extending the time-changed CDF $\mu\left[0,s^{-1}\left(t\right)\right]$
by zero, we are led to consider the semi-linear parabolic PDE 
\begin{equation}
\begin{cases}
\partial_{t}u-\Delta u=m\left(t\right)u_{x}^{2} & \left(t,x\right)\in\R_{+}\times\R\\
u\left(0,x\right)=g\left(x\right) & x\in\R
\end{cases}\label{eq:semilinearPDE}
\end{equation}
where $g\left(x\right)=\log\cosh x$ and $m\left(t\right)=\mu\left[0,s^{-1}\left(t\right)\right]1_{t\leq(\xi'(1)-\xi'(0))/2}$.
We carry over the definition of weak solution from before: a continuous function $u:[0,\infty)\times\R\to \R$ with essentially
bounded weak derivative $\partial_x u$ is a \emph{weak solution} to \prettyref{eq:semilinearPDE} if it satisfies
\[
0=\int_{0}^{\infty}\int_{\R} u\partial_{t}\phi+
u\partial_{xx}\phi+m(t)\left(\partial_{x}u\right)^{2}\phi\, dxdt+\int_{\R}\phi\left(0,x\right)g(x)\, dx
\]
for every $\phi\in C_{c}^{\infty}\left([0,\infty)\times\R\right).$ 
Evidently, the existence, uniqueness, and regularity theory of weak solutions to the Parisi PDE is 
captured by that of \prettyref{eq:semilinearPDE}.

Our proof of the well-posedness of \prettyref{eq:semilinearPDE} boils down to the
study of a certain fixed point equation, which we introduce now.
Let $e^{t\Delta}$ be the heat semigroup on $\R$, i.e.
\[
\left(e^{t\Delta}h\right)\left(x\right)=\frac{1}{\sqrt{4\pi t}}\int_{\R}e^{-\frac{\abs{x-y}^{2}}{4t}}h\left(y\right)\, dy.
\]
Then, $u$ weakly solves \prettyref{eq:semilinearPDE} if and only
if $u$ satisfies
\begin{equation}
u\left(t\right)=e^{t\Delta}g +\int_{0}^{t}e^{\left(t-s\right)\Delta}m\left(s\right)u_{x}^{2}
\left(s\right)\, ds.\label{eq:FP}
\end{equation}
This is an application of Duhamel's principle (see e.g. \cite[Ch. 2]{EvansPDE}). For completeness, we present this in \prettyref{prop:duhamel}.

In Sections \ref{sec:FPexists}-\ref{sec:FPisunique} below, we
prove the existence, uniqueness, and regularity of fixed points of \prettyref{eq:FP}
on a certain complete metric space.
The properties of
$g$ and $m$ we will be using are that
\begin{itemize}
\item $g'\in L^{\infty}$ and $\frac{d^{j}}{dx^{j}}g\in L^{2}\cap L^{\infty}$
for $j\geq2$
\item $m$ is a monotonic function of time alone and $\norm{m}_{\infty}\leq1$.
\end{itemize}
These properties will inform our choice of space
on which to study \prettyref{eq:FP}. The exact bound on $m$ does not matter,
but we include it for convenience.

Once \prettyref{thm:PDE-existence-reg} is established, one can give a quick proof of the final component of 
wellposedness, namely the continuity of the map from
$\mu$ to the corresponding solution of the Parisi PDE, using standard SDE techniques. This is in \prettyref{sec:cts-dep}.

The notation $\lesssim_{c}$ denotes an inequality that is true up to a universal constant that depends only on $c$.
Throughout the proofs below, we will use two elementary estimates for the heat kernel which we record here:
\begin{equation} \label{eq:heat_ests}
 \norm{e^{t\Delta}}_{L^p\to L^p} \leq 1 \quad \text{and} \quad \norm{\partial_x e^{t\Delta}}_{L^p\to L^p} \lesssim \frac{1}{\sqrt{t}}.
\end{equation}

\subsection{Existence of a fixed point} \label{sec:FPexists}

We prove existence of a fixed point to \prettyref{eq:FP}. First we
show there exists a solution for short-times $t<T_{*}$, then by using
an a priori estimate we prove a solution exists for all time.

Short-time existence comes via a contraction mapping argument. Define
the Banach space
\[
\cX=\left\{ \psi\in L^{\infty}\left(\R\right)\right\} \cap\left\{ \psi_{x}\in L^{\infty}\left(\R\right)\right\} \cap\left\{ \psi_{xx}\in L^{2}\left(\R\right)\right\} 
\]
with the norm 
\[
\norm{\psi}_{\cX}=\norm{\psi}_{\infty}\vee\norm{\psi_{x}}_{\infty}\vee\norm{\psi_{xx}}_{2},
\]
and for each $T>0$ define the complete metric space
\[
X_{T}^{h}=\left\{ e^{t\Delta}h+\phi\,:\,\phi\in L^{\infty}\left(\left[0,T\right];\cX\right)\right\} \cap\left\{ \norm{\phi_{x}}_{L^{\infty}\left(\left[0,T\right]\times\R\right)}\leq\norm{h'}_{\infty},\,\norm{\phi_{xx}}_{L^{\infty}\left(\left[0,T\right];L^{2}\left(\R\right)\right)}\leq\norm{h''}_{2}\right\} 
\]
with the distance
\[
d_{X_{T}^{h}}\left(u,v\right)=\norm{u-v}_{L^{\infty}\left(\left[0,T\right];\cX\right)}.
\]
The symbol $h$ in the definition of the space refers to the initial
data, which is assumed to satisfy $h'\in L^\infty$ and $h''\in L^2$.

Given $u\in X_{T}^{h}$ define the map
\begin{equation}
A\left[u\right]=e^{t\Delta}h+\int_{0}^{t}e^{\left(t-s\right)\Delta}m\left(s\right)u_{x}^{2}\left(s\right)\, ds.\label{eq:mapA}
\end{equation}

\begin{lem}
\label{lem:(short-time-existence)}(short-time existence) Let
\begin{equation}
T_{*}\left(h\right)=\min\left\{ 1,\left[C\cdot\left(\norm{h'}_{\infty}+\norm{h''}_{2}\right)\right]^{-2}\right\} \label{eq:theshorttime}
\end{equation}
where $C\in\R_{+}$ is a universal constant. Then for all $T\in(0,T_{*})$, 
\begin{itemize}
\item (self-map) $A:X_{T}^{h}\to X_{T}^{h}$ 
\item (strict contraction) There exists $\alpha<1$ such that 
\[
d_{X_{T}^{h}}\left(A\left[u\right],A\left[v\right]\right)\leq\alpha\cdot d_{X_{T}^{h}}\left(u,v\right),\quad u,v\in X_{T}^{h}.
\]

\end{itemize}
Therefore for every $T<T_{*}\left(h\right)$ there exists $u\in X_{T}^{h}$
satisfying $u=A\left[u\right]$.\end{lem}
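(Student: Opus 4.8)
The plan is to prove \prettyref{lem:(short-time-existence)} by the standard Banach fixed point argument, carefully tracking the three norms that define $\cX$ and verifying that $A$ preserves the two extra constraints that cut out $X_T^h$ inside $e^{t\Delta}h + L^\infty([0,T];\cX)$. First I would record that $A[u] - e^{t\Delta}h = \int_0^t e^{(t-s)\Delta} m(s) u_x^2(s)\, ds =: \phi(t)$, so the task is to estimate $\norm{\phi}_\infty$, $\norm{\phi_x}_\infty$, and $\norm{\phi_{xx}}_2$ uniformly in $t \leq T$. For the self-map property, I would use the two heat-kernel estimates \prettyref{eq:heat_ests} together with $\norm{m}_\infty \leq 1$: for the zeroth derivative, $\norm{\phi(t)}_\infty \leq \int_0^t \norm{u_x^2(s)}_\infty\, ds \leq T \norm{u_x}_{L^\infty L^\infty}^2$; for the first derivative, $\norm{\phi_x(t)}_\infty \leq \int_0^t \norm{\partial_x e^{(t-s)\Delta}}_{L^\infty\to L^\infty} \norm{u_x^2(s)}_\infty\, ds \lesssim \sqrt{T}\, \norm{u_x}_{L^\infty L^\infty}^2$; and for the second derivative, writing $\partial_{xx}(u_x^2) = 2 u_x u_{xxx} + 2 u_{xx}^2$ is awkward (it needs a third derivative), so instead I would put one derivative on the semigroup: $\norm{\phi_{xx}(t)}_2 \leq \int_0^t \norm{\partial_x e^{(t-s)\Delta}}_{L^2\to L^2}\, \norm{\partial_x(u_x^2)(s)}_2\, ds \lesssim \int_0^t (t-s)^{-1/2} \norm{u_x(s)}_\infty \norm{u_{xx}(s)}_2\, ds \lesssim \sqrt{T}\, \norm{u_x}_{L^\infty L^\infty} \norm{u_{xx}}_{L^\infty L^2}$. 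On $X_T^h$ we have the bounds $\norm{u_x}_{L^\infty L^\infty} \leq 2\norm{h'}_\infty$ and $\norm{u_{xx}}_{L^\infty L^2} \leq 2\norm{h''}_2$ (from the triangle inequality with $\norm{e^{t\Delta}h}$), so each of these is $\leq \mathrm{const}\cdot\sqrt{T}\,(\norm{h'}_\infty + \norm{h''}_2)^2$ or similar; choosing $T < T_*(h)$ with $C$ large enough forces $\norm{\phi_x}_{L^\infty L^\infty} \leq \norm{h'}_\infty$ and $\norm{\phi_{xx}}_{L^\infty L^2} \leq \norm{h''}_2$, which is exactly the constraint defining $X_T^h$. (The $\min\{1,\cdot\}$ in \prettyref{eq:theshorttime} is just to keep $\sqrt{T} \leq T^{1/2} \leq 1$ so lower-order-in-$T$ terms are controlled too.)

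Next, for the strict contraction, I would estimate $A[u] - A[v] = \int_0^t e^{(t-s)\Delta} m(s)\,(u_x^2 - v_x^2)(s)\, ds$ using the factorization $u_x^2 - v_x^2 = (u_x + v_x)(u_x - v_x)$. The zeroth- and first-derivative parts go exactly as above, producing bounds like $(\sqrt{T} \text{ or } T)\cdot(\norm{u_x}_{L^\infty L^\infty} + \norm{v_x}_{L^\infty L^\infty})\cdot d_{X_T^h}(u,v)$. For the $L^2$ part of the $\cX$-norm of the difference I would again move a derivative onto the semigroup and expand $\partial_x(u_x^2 - v_x^2) = (u_{xx}+v_{xx})(u_x - v_x) + (u_x + v_x)(u_{xx} - v_{xx})$, bounding each term by $L^\infty \times L^2$ and picking up a $\int_0^t (t-s)^{-1/2}\,ds = 2\sqrt t$. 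All told $d_{X_T^h}(A[u],A[v]) \leq \alpha(T)\, d_{X_T^h}(u,v)$ with $\alpha(T) \to 0$ as $T \to 0$, and $\alpha(T_*) < 1$ after possibly enlarging the universal constant $C$; take $T < T_*$ and $\alpha := \alpha(T) < 1$. Then $X_T^h$ is a complete metric space (it is a closed subset of the Banach space $e^{t\Delta}h + L^\infty([0,T];\cX)$ — the two constraint inequalities are preserved under $L^\infty([0,T];\cX)$-limits), so the contraction mapping theorem yields a unique $u \in X_T^h$ with $u = A[u]$.

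The step I expect to be the main technical obstacle is controlling $\norm{\phi_{xx}}_{L^\infty([0,T];L^2)}$, both in the self-map and in the contraction estimate. The naive route — differentiating $u_x^2$ twice in space — requires $u_{xxx}$, which is not controlled by the $\cX$-norm, so one is forced to integrate by parts / move a derivative onto $e^{(t-s)\Delta}$ and live with the non-integrable-at-first-glance singularity $(t-s)^{-1/2}$; fortunately this is integrable, and it is precisely this maneuver that dictates the choice of $\cX = \{L^\infty\} \cap \{\partial_x \in L^\infty\} \cap \{\partial_{xx} \in L^2\}$ (the $L^\infty \cdot L^2 \to L^2$ algebra property and the $L^2$ boundedness of $\partial_x e^{t\Delta}$ are what make everything close). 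A secondary point to be careful about is that $A[u]$ genuinely lands back in the affine space $e^{t\Delta}h + L^\infty([0,T];\cX)$ — i.e., that the Duhamel integral is continuous (even $C^0$) in $t$ with values in $\cX$ — but this follows from dominated convergence together with the strong continuity of the heat semigroup on $L^2$ and the uniform-in-$t$ bounds already obtained, so I would mention it only briefly.
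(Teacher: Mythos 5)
Your proof is correct and follows essentially the same strategy as the paper: a Banach fixed-point argument in $X_T^h$ using the heat-kernel estimates \eqref{eq:heat_ests}, with the key maneuver of placing one spatial derivative onto the semigroup to write $\phi_{xx} = \int_0^t \partial_x e^{(t-s)\Delta}\, 2m\,u_x u_{xx}\,ds$ so that only $u_x \in L^\infty$ and $u_{xx} \in L^2$ are needed. The paper is terser about the completeness of $X_T^h$ and the combination of the self-map time $T_0$ with the contraction time $T_1$, but these are the same bounds you derive.
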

\begin{proof}
First we prove $A$ is a self-map. Let $u\in X_{T}^{h}$ and call
\[
\psi=A\left[u\right]-e^{t\Delta}h=\int_{0}^{t}e^{\left(t-s\right)\Delta}m\left(s\right)u_{x}^{2}\left(s\right)\, ds.
\]
Note that
\begin{align*}
\psi_x &= \int_0^t \partial_x e^{(t-s)\Delta} mu_x^2(s)\,ds \\
\psi_{xx} &= \int_0^t \partial_x e^{(t-s)\Delta} 2mu_xu_{xx}(s)\,ds.
\end{align*}
The estimates in \prettyref{eq:heat_ests} and the definition of $X_{T}^{h}$
imply the bounds 
\begin{align*}
\norm{\psi}_{L^{\infty}\left(\left[0,T\right]\times\R\right)} & \lesssim T\norm{h'}_{\infty}^{2}\\
\norm{\psi_{x}}_{L^{\infty}\left(\left[0,T\right]\times\R\right)} & \lesssim T^{1/2}\norm{h'}_{\infty}^{2}\\
\norm{\psi_{xx}}_{L^{\infty}\left(\left[0,T\right];L^{2}\left(\R\right)\right)} & \lesssim T^{1/2}\norm{h'}_{\infty}\norm{h''}_{2}.
\end{align*}
Therefore there is a universal constant $C\in\R_{+}$ such that $A:X_{T}^{h}\to X_{T}^{h}$
whenever 
\[
T\leq T_{0}\left(h\right)=\left(C\norm{h'}_{\infty}\right)^{-2}.
\]

Now we prove $A$ is a strict contraction. Let $u,v\in X_{T}^{h}$ and
call
\[
D=A\left[u\right]-A\left[v\right]=\int_{0}^{t}e^{\left(t-s\right)\Delta}m\left(s\right)\left(u_{x}^{2}\left(s\right)-v_{x}^{2}\left(s\right)\right)\, ds.
\]
The estimates in \prettyref{eq:heat_ests} and the definition of $X_{T}^{h}$
give
\[
d_{X_{T}^{h}}\left(A\left[u\right],A\left[v\right]\right)\leq C\cdot\max\left\{ T\norm{h'}_{\infty},T^{1/2}\norm{h'}_{\infty},T^{1/2}\left(\norm{h''}_{2}+\norm{h'}_{\infty}\right)\right\} d_{X_{T}^{h}}\left(u,v\right)
\]
where $C\in\R_{+}$ is a universal constant. Therefore, if 
\[
T_{1}\left(h\right)=\min\left\{ 1,\left[C\cdot\left(\norm{h'}_{\infty}+\norm{h''}_{2}\right)\right]^{-2}\right\} 
\]
then $A$ is a strict contraction on $X_{T}^{h}$ for all $T<T_{0}\wedge T_{1}$.
Since $T_{1}\leq T_{0}$ we may take $T_{*}=T_{1}$.
\end{proof}

To prove the existence of a global-in-time solution to \prettyref{eq:FP} we will work in the space
\[
X_{T}=\left\{ e^{t\Delta}g+\phi\,:\,\phi\in L^{\infty}\left(\left[0,T\right];\cX\right)\right\} 
\]
defined for each $T\in\R_+$. Note $X_T^g\subset X_T$ so that by \prettyref{lem:(short-time-existence)}, if we take $T<T_*(g)$
then there exists $u\in X_{T}$ satisfying the fixed point equation \prettyref{eq:FP}. To extend $u$ to all of time we require the following a priori estimates.

\begin{lem}
\label{lem:(a-priori-estimates)}(a priori estimates) Let $T\in\R_+$ and assume $u\in X_{T}$ satisfies \prettyref{eq:FP}. Then
\begin{align*}
\norm{u_{x}}_{L^{\infty}\left(\left[0,T\right]\times\R\right)} & \leq\norm{g'}_{\infty}\\
\norm{u_{xx}}_{L^{\infty}\left(\left[0,T\right];L^{2}\left(\R\right)\right)} & \leq\norm{g''}_{2}\exp\left(\norm{g'}_{\infty}^{2}T\right).
\end{align*}
\end{lem}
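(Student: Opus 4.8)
The plan is to derive each bound directly from the fixed point equation \prettyref{eq:FP}, using the heat kernel estimates \prettyref{eq:heat_ests} together with a Gr\"onwall-type argument. Since $u\in X_T$ satisfies $u(t) = e^{t\Delta}g + \int_0^t e^{(t-s)\Delta}m(s)u_x^2(s)\,ds$, I would first differentiate in $x$ to obtain $u_x(t) = e^{t\Delta}g' + \int_0^t \partial_x e^{(t-s)\Delta}\big(m(s)u_x^2(s)\big)\,ds$ and $u_{xx}(t) = e^{t\Delta}g'' + \int_0^t \partial_x e^{(t-s)\Delta}\big(2m(s)u_xu_{xx}(s)\big)\,ds$; these identities are justified for $u\in X_T$ by the same reasoning used in the proof of \prettyref{lem:(short-time-existence)}.

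For the $L^\infty$ bound on $u_x$, the naive approach via \prettyref{eq:heat_ests} loses a factor $t^{-1/2}$ which is not integrable in a way that closes the estimate cleanly, so instead I would appeal to the maximum principle: $v = u_x$ is a (weak) solution of the linear parabolic equation $\partial_t v - \Delta v = 2m(t)u_x \cdot v_x$, a drift-diffusion equation with no zeroth-order term and with bounded coefficients (the drift $2m(t)u_x$ is bounded since $u\in X_T$ and $\|m\|_\infty\le 1$). Hence $\|u_x(t)\|_\infty \le \|e^{t\Delta}g'\|_\infty \le \|g'\|_\infty$ for all $t\le T$ by the contraction property of the heat semigroup combined with the comparison principle for this linear equation; alternatively one can run a Duhamel/Gr\"onwall argument on $\|u_x(t)\|_\infty$ exploiting that $m u_x^2$ has a sign-compatible structure. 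This first bound is then used as an input to the second.

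For the $L^2$ bound on $u_{xx}$, I would take the $L^2$ norm of the identity for $u_{xx}$ above and apply \prettyref{eq:heat_ests}:
\[
\|u_{xx}(t)\|_2 \le \|g''\|_2 + \int_0^t \frac{C}{\sqrt{t-s}}\,\|2m(s)u_x(s)u_{xx}(s)\|_2\,ds \le \|g''\|_2 + 2C\|g'\|_\infty\int_0^t \frac{\|u_{xx}(s)\|_2}{\sqrt{t-s}}\,ds,
\]
where I have inserted the already-established bound $\|u_x\|_\infty\le\|g'\|_\infty$. This is a singular (weakly singular) Gr\"onwall inequality; iterating it once removes the singularity, and the generalized Gr\"onwall lemma then yields $\|u_{xx}(t)\|_2 \le \|g''\|_2\exp(c\|g'\|_\infty^2 t)$ for an appropriate constant. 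To get the clean constant stated in the lemma (with exponent exactly $\|g'\|_\infty^2 T$), I would instead differentiate $\tfrac{1}{2}\tfrac{d}{dt}\|u_{xx}(t)\|_2^2$ directly using the PDE $\partial_t u_{xx} - \Delta u_{xx} = \partial_x^2(m u_x^2) = 2m(u_{xx}^2 + u_x u_{xxx})$: testing against $u_{xx}$ and integrating by parts, the $\int u_x u_{xxx} u_{xx}$ term becomes $-\tfrac12\int u_x \partial_x(u_{xx}^2) = \tfrac12\int u_{xx}^3 \cdot(\text{from }u_{xx})$... more carefully, $\int 2m u_x u_{xxx}u_{xx}\,dx = -\int \partial_x(2m u_x u_{xx})u_{xx}\,dx$ gives after rearrangement a bound $\tfrac{d}{dt}\|u_{xx}\|_2^2 \le -2\|u_{xxx}\|_2^2 + C\|u_x\|_\infty\|u_{xx}\|_2\|u_{xxx}\|_2 + 2\|m\|_\infty\|u_x\|_\infty\|u_{xx}\|_2^2$; absorbing the cross term into the good term $-\|u_{xxx}\|_2^2$ via Young's inequality leaves $\tfrac{d}{dt}\|u_{xx}\|_2^2 \le 2\|g'\|_\infty^2\|u_{xx}\|_2^2$, and Gr\"onwall finishes it.

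The main obstacle is the $L^\infty$ bound on $u_x$: the weakly singular kernel $t^{-1/2}$ from $\partial_x e^{t\Delta}$ does not allow a direct Gr\"onwall closure at the $L^\infty$ level without either invoking the maximum/comparison principle for the linearized equation or first upgrading to an $L^2$-based energy estimate and bootstrapping. Making the maximum principle argument rigorous at the level of weak solutions (justifying that $u_x$ is itself a weak solution of the stated linear equation, which is essentially the content of the later parts of \prettyref{thm:PDE-existence-reg}, and that the comparison principle applies despite the unbounded spatial domain) is where the real care is needed; everything after that is a routine singular-Gr\"onwall or energy computation.
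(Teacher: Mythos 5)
Your plan for the first estimate matches the paper: differentiate the Duhamel identity to see $u_x$ solves the linear drift--diffusion equation $\partial_t u_x - \Delta u_x = 2m(t)\,u_x\,\partial_x u_x$ with bounded coefficients, and invoke the maximum principle in the unbounded domain, which applies because $u_x$ is bounded by membership in $X_T$. The paper justifies the pointwise form of this equation via Corollary~\ref{cor:ptwisePDE}, exactly the ``real care'' step you flag.

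For the second estimate your singular-Gr\"onwall route (Duhamel for $u_{xx}$, the kernel bound $\lVert\partial_x e^{(t-s)\Delta}\rVert \lesssim (t-s)^{-1/2}$, and a weakly-singular Gr\"onwall/Henry inequality) does give $\lVert u_{xx}(t)\rVert_2 \lesssim \lVert g''\rVert_2 \exp(c\lVert g'\rVert_\infty^2 t)$, which is all the later arguments need, since the exact constant in the exponent is immaterial. However, the ``clean'' energy computation you prefer is flawed. Testing $\partial_t u_{xx} - \Delta u_{xx} = 2m(u_{xx}^2 + u_x u_{xxx})$ against $u_{xx}$, the term $2m\int u_x u_{xxx} u_{xx}\,dx$ integrates by parts to $-m\int u_{xx}^3\,dx$, so after combining with the other piece one is left with
\[
\frac{1}{2}\frac{d}{dt}\lVert u_{xx}\rVert_2^2 = -\lVert u_{xxx}\rVert_2^2 + m\int u_{xx}^3\,dx,
\]
and the cubic term does \emph{not} cancel and cannot be bounded by $\lVert u_x\rVert_\infty\lVert u_{xx}\rVert_2^2$ or absorbed into $-\lVert u_{xxx}\rVert_2^2$ with constants independent of the solution; your claimed differential inequality $\frac{d}{dt}\lVert u_{xx}\rVert_2^2 \leq 2\lVert g'\rVert_\infty^2\lVert u_{xx}\rVert_2^2$ is therefore not established. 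The paper sidesteps this entirely by applying the energy estimate (Lemma~\ref{lem:energyestimate}) to the Duhamel representation of $u_x$, i.e.\ to the equation $\partial_t u_x - \Delta u_x = 2m\,u_x\,u_{xx}$, whose right-hand side is bilinear rather than quadratic in the top-order derivative; the resulting bound on $\lVert\psi_x\rVert_2 = \lVert u_{xx}\rVert_2$ has forcing $\lVert 2m u_x u_{xx}\rVert_2 \leq 2\lVert u_x\rVert_\infty\lVert u_{xx}\rVert_2$, and ordinary Gr\"onwall closes. If you want the energy route, work at the level of $u_x$, not $u_{xx}$.
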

\begin{proof}
The estimate on $u_{x}$ is derived by the maximum principle. By Corollary
\ref{cor:ptwisePDE} (see below) we have
\[
\partial_{t}^{\pm}u_{x}\left(t,x\right)-\Delta u_{x}\left(t,x\right)=2m\left(t\pm\right)u_{x}\partial_{x}u_{x}\left(t,x\right),\quad\forall\,\left(t,x\right)\in\left(0,T\right)\times\R
\]
and by assumption $u_x$ is bounded. Now the usual proof of the maximum principle for linear parabolic PDE in unbounded domains goes through \cite{friedman2013partial}.

For the estimate on $u_{xx}$ observe that 
\[
u_{x}=e^{t\Delta}g'+\int_{0}^{t}e^{\left(t-s\right)\Delta}2mu_{x}u_{xx}\left(s\right)\, ds,
\]
so by a standard energy estimate (see \prettyref{lem:energyestimate} below) we have for almost every $t\leq T$
\[
\norm{u_{xx}}_{L^{2}\left(\R\right)}^{2}(t)\leq2\norm{u_{x}}_{L^{\infty}\left(\left[0,T\right]\times\R\right)}^{2}\int_0^t\norm{u_{xx}}_{L^{2}\left(\R\right)}^{2}(s)ds+\norm{g''}_{2}^{2}.
\]
The desired result follows from Gronwall's inequality \cite{EvansPDE} and the
a priori bound on $u_{x}$.\end{proof}
\begin{cor}
(global existence) For each $T\in\R_+$, there exists $u_T\in X_T$ satisfying \prettyref{eq:FP}. 
The solutions $\{u_T\}_{T\in\R_+}$ so produced agree on their common domains.
\end{cor}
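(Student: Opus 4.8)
The plan is to run the standard continuation scheme. \prettyref{lem:(short-time-existence)} together with the inclusion $X_T^g \subset X_T$ already supplies a solution $u \in X_T$ of \prettyref{eq:FP} for every $T < T_*(g)$; the task is to extend this to all times using the a priori bounds of \prettyref{lem:(a-priori-estimates)}, and then to check the extensions are mutually consistent. The first ingredient is a \emph{gluing lemma}: if $u \in X_{T_0}$ solves \prettyref{eq:FP} on $[0,T_0]$ and $v$ solves the shifted fixed-point equation
\[
v(t) = e^{t\Delta}u(T_0,\cdot) + \int_0^t e^{(t-s)\Delta}\, m(T_0+s)\, v_x^2(s)\, ds
\]
on $[0,\tau]$, then the concatenation $w$ --- equal to $u$ on $[0,T_0]$ and to $v(\,\cdot\, - T_0)$ on $[T_0,T_0+\tau]$ --- solves \prettyref{eq:FP} on $[0,T_0+\tau]$ and lies in $X_{T_0+\tau}$. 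I would prove this by substituting the representation formula for $u(T_0,\cdot)$ into the equation for $v$, applying the semigroup identity $e^{(t-s)\Delta} = e^{(t-T_0)\Delta}e^{(T_0-s)\Delta}$, and reassembling the Duhamel integral over $[0,T_0+\tau]$; membership in $X_{T_0+\tau}$ only uses $\norm{e^{t\Delta}}_{L^p\to L^p}\le 1$. Crucially the shifted coefficient $s\mapsto m(T_0+s)$ is again monotone in time with sup-norm at most one, so \prettyref{lem:(short-time-existence)} applies verbatim to the shifted problem with initial datum $h = u(T_0,\cdot)$.

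Next, let $T^*$ be the supremum of those $T > 0$ for which \prettyref{eq:FP} has a solution in $X_T$. Restriction of a solution is a solution, so this set is an interval, and it is nonempty; hence $T^* \in (0,\infty]$. Suppose $T^* < \infty$. For any $T_0 < T^*$ there is a solution $u$ on $[0,T_0]$, and \prettyref{lem:(a-priori-estimates)} gives $\norm{u_x(T_0,\cdot)}_\infty \le \norm{g'}_\infty$ and $\norm{u_{xx}(T_0,\cdot)}_2 \le \norm{g''}_2 e^{\norm{g'}_\infty^2 T^*} =: M$. Consequently the short time \prettyref{eq:theshorttime} for the shifted problem with datum $h = u(T_0,\cdot)$ satisfies $T_*(h) \ge \delta_0 := \min\{1, (C(\norm{g'}_\infty + M))^{-2}\} > 0$, \emph{uniformly in $T_0$}. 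Solving the shifted problem on $[0,\tau]$ and gluing then produces a solution of \prettyref{eq:FP} on $[0,T_0+\tau]$ for every $\tau < \delta_0$; choosing $T_0 \in (T^*-\delta_0,\, T^*)$ and $\tau \in (T^*-T_0,\, \delta_0)$ yields a solution past time $T^*$, contradicting the definition of $T^*$. Therefore $T^* = \infty$, so $X_T$ contains a solution of \prettyref{eq:FP} for every $T \in \R_+$.

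For the consistency statement it suffices to show any two solutions $u \in X_{T_1}$, $v \in X_{T_2}$ coincide on $[0,\min(T_1,T_2)]$. The key local fact is that for $T$ sufficiently small, depending only on $\norm{g'}_\infty$ and $\norm{g''}_2$, the a priori bounds combined with the heat-kernel estimates \prettyref{eq:heat_ests} force the Duhamel part of \emph{any} $X_T$-solution of \prettyref{eq:FP} to obey the extra constraints defining $X_T^g$, so that the solution equals the \emph{unique} fixed point of the contraction of \prettyref{lem:(short-time-existence)}; thus $u = v$ on a short initial interval. A connectedness argument finishes the job: letting $t_0$ be the supremum of the times up to which $u$ and $v$ agree, continuity in time of the mild solution gives $u(t_0,\cdot) = v(t_0,\cdot)$, and restarting the short-time uniqueness argument at $t_0$ with this common value as initial datum --- the relevant norms being uniformly controlled on $[0,\min(T_1,T_2)]$ exactly as in the previous paragraph --- shows $t_0 = \min(T_1,T_2)$.

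The step I expect to be the main obstacle is not the continuation scheme per se but the low-regularity bookkeeping it rests on: one must make sense of the trace $u(T_0,\cdot)$ of a solution at a fixed time and verify that the Duhamel term $t \mapsto \int_0^t e^{(t-s)\Delta} m u_x^2(s)\, ds$ defines a genuinely \emph{continuous} $\cX$-valued curve, even though its spatial derivative $2m u_x u_{xx}$ only lies in $L^2$. This is handled by the smoothing estimates \prettyref{eq:heat_ests} in the same way as the self-map part of \prettyref{lem:(short-time-existence)}: moving one $x$-derivative onto the semigroup factor costs the integrable power $(t-s)^{-1/2}$, which yields both the $\cX$-bounds and, via dominated convergence, the continuity. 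The rest is routine.
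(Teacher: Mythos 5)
Your continuation scheme is sound and its conclusion agrees with the paper's, but the route you take for the uniqueness/consistency part is genuinely different from theirs. For \emph{existence}, the paper's argument is the same as yours in outline --- define the maximal time $T_M$ of existence, note that by Lemma~\ref{lem:(short-time-existence)} a finite $T_M$ would force $\limsup_{T\uparrow T_M}\bigl(\norm{(u_T)_x}_{L^\infty}+\norm{(u_T)_{xx}}_{L^\infty L^2}\bigr)=\infty$, and then invoke the a~priori estimates of Lemma~\ref{lem:(a-priori-estimates)} to rule that out --- but the paper leaves the continuation step (``otherwise we could construct a solution extending for times beyond $T_M$'') implicit, whereas you state and verify the gluing lemma explicitly via the semigroup identity. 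Your added detail is correct and worth having. For \emph{consistency}, however, the paper simply applies Lemma~\ref{lem:uniqueness}: any two fixed points of \eqref{eq:FP} with essentially bounded $x$-derivatives coincide, a fact proved by an iterative contractive estimate on $d_x=u_x-v_x$ and needing no a~priori energy bounds and no appeal to the ball $X_T^g$. Your alternative --- show that for small $T$ every $X_T$-solution is forced by the a~priori bounds to lie in $X_T^g$, identify it with the unique contraction fixed point, then propagate by connectedness --- also works, and has the modest virtue of staying self-contained within Section~2.1 (the paper's citation of Lemma~\ref{lem:uniqueness} is a forward reference to Section~2.3). But it costs more: you need the a~priori estimates uniformly along $[0,\min(T_1,T_2)]$ even for the uniqueness step, and the ``force it into the ball'' argument adds a layer that Lemma~\ref{lem:uniqueness} makes unnecessary. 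In short, your proof is correct; the paper's uniqueness argument is cleaner and logically lighter.
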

\begin{proof}

Define the maximal time of existence $T_M$ to be the supremum over $T\in\R_+$ such that
there exists $u_T\in X_T$ satisfying \prettyref{eq:FP}. If $T_M<\infty$
then by \prettyref{lem:(short-time-existence)} we must have 
\[
\limsup_{T\uparrow T_M}\ \norm{(u_T)_x}_{L^\infty([0,T]\times\R)}+\norm{(u_T)_{xx}}_{L^\infty([0,T];L^2(\R))} = \infty,
\]
otherwise we could construct a solution extending for times beyond $T_M$.
Therefore by \prettyref{lem:(a-priori-estimates)} we must have $T_M=\infty$. 

A quick application of \prettyref{lem:uniqueness} shows that $u_T=u_{T'}$ for $t\leq T\wedge T'$.

\end{proof}

\subsection{Regularity of fixed points}

One proves the higher regularity of the fixed point $u$ by a parabolic bootstrapping procedure.
\begin{lem}
\label{lem:higher-regularity}(higher regularity) Assume $u\in X_{T}$ satisfies \prettyref{eq:FP}. Then $u$ satisfies
\begin{itemize}
\item $\partial_{x}^{j}u\in L^{\infty}\left(\left[0,T\right];L^{2}\left(\R\right)\cap L^{\infty}\left(\R\right)\right)$
for $j\geq2$
\item $\partial_{t}u\in L^{\infty}\left(\left[0,T\right]\times\R\right)$
and $\partial_{t}\partial_{x}^{j}u\in L^{\infty}\left(\left[0,T\right];L^{2}\left(\R\right)\cap L^{\infty}\left(\R\right)\right)$
for $j\geq1$.
\end{itemize}
\end{lem}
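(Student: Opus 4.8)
The plan is a parabolic bootstrap on the mild formulation \prettyref{eq:FP}: gain one space derivative per stage, closing each stage with a Gronwall inequality with a weakly singular kernel, and then read off the time derivatives from the equation. Besides \prettyref{eq:heat_ests} I will use the standard smoothing estimates $\norm{\partial_x e^{\tau\Delta}}_{L^2\to L^2}\lesssim\tau^{-1/2}$ and $\norm{\partial_x e^{\tau\Delta}}_{L^2\to L^\infty}\lesssim\tau^{-3/4}$ (both integrable in $\tau$ near $0$), the a priori bounds $\norm{u_x}_{L^\infty([0,T]\times\R)}\le\norm{g'}_\infty$ and $u_{xx}\in L^\infty([0,T];L^2)$ from \prettyref{lem:(a-priori-estimates)}, and the assumed decay $g^{(j)}\in L^2\cap L^\infty$ for $j\ge2$.

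First I would prove, by induction on $j\ge2$, that $\partial_x^j u\in L^\infty([0,T];L^2(\R)\cap L^\infty(\R))$. Granting this for orders $2,\dots,j-1$ and differentiating \prettyref{eq:FP} $j$ times with exactly one derivative left on the kernel,
\[
\partial_x^j u(t)=e^{t\Delta}g^{(j)}+\int_0^t\partial_x e^{(t-s)\Delta}\,m(s)\,\partial_x^{j-1}(u_x^2)(s)\,ds .
\]
By Leibniz $\partial_x^{j-1}(u_x^2)=2u_x\,\partial_x^j u+R_j$, where $R_j$ is a sum of products of two space derivatives of $u$ each of order between $2$ and $j-1$ (so $R_2=0$ and $R_3=2u_{xx}^2$); the inductive hypothesis, together with $L^2\cdot L^\infty\subseteq L^2$ and $L^\infty\cdot L^\infty\subseteq L^\infty$, puts $R_j\in L^\infty([0,T];L^2\cap L^\infty)$. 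Taking $L^2$ norms and using $\norm{\partial_x e^{\tau\Delta}}_{L^2\to L^2}\lesssim\tau^{-1/2}$, the function $\phi(t):=\norm{\partial_x^j u(t)}_{L^2}$ satisfies
\[
\phi(t)\le\norm{g^{(j)}}_{L^2}+C\sqrt T\,\norm{R_j}_{L^\infty_tL^2}+C\norm{g'}_\infty\int_0^t(t-s)^{-1/2}\phi(s)\,ds ,
\]
a linear singular-kernel Gronwall inequality, hence $\phi\in L^\infty([0,T])$. With $\partial_x^j u$ now bounded in $L^\infty_tL^2$, the same identity and $\norm{\partial_x e^{\tau\Delta}}_{L^2\to L^\infty}\lesssim\tau^{-3/4}$ bound $\norm{\partial_x^j u(t)}_\infty$ by $\norm{g^{(j)}}_\infty$ plus the integral of $(t-s)^{-3/4}$ against the now-finite quantity $\norm{g'}_\infty\norm{\partial_x^j u(s)}_{L^2}+\norm{R_j(s)}_{L^2}$, closing the induction. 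The base case $j=2$ needs no Gronwall, since $u_{xx}\in L^\infty_tL^2$ is already known and $R_2=0$.

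Once all space derivatives are controlled, the time derivatives are immediate: differentiating \prettyref{eq:FP} in $t$ and transferring the Laplacian onto the nonlinearity — permissible given the spatial regularity just established — yields the pointwise relation $\partial_t u=\partial_x^2 u+m(t)u_x^2$ for a.e.\ $(t,x)$, so $\norm{\partial_t u}_{L^\infty([0,T]\times\R)}\le\norm{u_{xx}}_\infty+\norm{m}_\infty\norm{u_x}_\infty^2<\infty$. Differentiating this relation $j\ge1$ times in $x$ gives $\partial_t\partial_x^j u=\partial_x^{j+2}u+m(t)\,\partial_x^j(u_x^2)$, whose right-hand side lies in $L^\infty([0,T];L^2\cap L^\infty)$: $\partial_x^{j+2}u$ does (order $\ge3$), and for $j\ge1$ every term of $\partial_x^j(u_x^2)$ contains a factor that is a space derivative of $u$ of order $\ge2$, hence lies in $L^2\cap L^\infty$.

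The one genuinely delicate point — the step I would do with real care — is the \emph{existence} of $\partial_x^j u$ as an honest function, which is tacitly used in writing the differentiated integral identity and in feeding $\norm{\partial_x^j u(t)}_{L^2}$ into Gronwall: to differentiate \prettyref{eq:FP} once more legitimately one must already know that $\partial_x^{j-1}(u_x^2)$ is a bona fide $L^\infty_tL^2$ function, i.e.\ that $u$ has $j$ space derivatives. I would obtain this by uniform difference-quotient bounds — the estimates above are translation invariant, so they apply verbatim to $\delta_h^x\partial_x^{j-1}u$ uniformly in $h$, and $h\to0$ yields $\partial_x^j u\in L^\infty_tL^2$ — or, alternatively, by re-running the contraction of \prettyref{lem:(short-time-existence)} in the higher-regularity space that also controls $\partial_x^i\cdot$ for $i\le j$ (legitimate precisely because $g^{(i)}\in L^2\cap L^\infty$ for all $i\ge2$), extending by the a priori estimates just derived, and identifying the result with $u$ via \prettyref{lem:uniqueness}. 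Everything else is routine bookkeeping; it is this existence step, not the estimates, that requires attention.
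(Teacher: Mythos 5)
Your proof is correct and follows the same parabolic-bootstrap-on-Duhamel strategy as the paper, but the technical machinery differs in a way worth recording. The paper gains its $L^\infty_tL^2_x$ control of the next derivative via the energy estimate (\prettyref{lem:energyestimate}), passing through the intermediate space $L^2_{tx}$ in a three-stage zig-zag ($u_{xxx}\in L^2_{tx}$, then $u_{xx}\in L^\infty_{tx}$, then $u_{xxx}\in L^\infty_tL^2_x$); you instead differentiate \prettyref{eq:FP} $j$ times with one derivative on the kernel and close directly in $L^\infty_tL^2_x$ with a singular-kernel Gronwall inequality, using $\norm{\partial_x e^{\tau\Delta}}_{L^2\to L^2}\lesssim\tau^{-1/2}$. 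This is a cleaner single pass per order and makes the induction transparent via the explicit Leibniz remainder $R_j$. You also correctly supply the $L^2\to L^\infty$ smoothing bound $\norm{\partial_x e^{\tau\Delta}}_{L^2\to L^\infty}\lesssim\tau^{-3/4}$; the paper invokes \prettyref{eq:heat_ests} for the claim that $\int_0^t\partial_x e^{(t-s)\Delta}\,ds$ maps $L^\infty_tL^2_x\to L^\infty_{tx}$, but \prettyref{eq:heat_ests} as stated only records $L^p\to L^p$ bounds, so your explicit $\tau^{-3/4}$ estimate is in fact the missing ingredient. Finally, your closing paragraph flagging the \emph{existence} of $\partial_x^j u$ — and proposing either uniform difference-quotient bounds or rerunning the contraction of \prettyref{lem:(short-time-existence)} in a higher-regularity space followed by identification through \prettyref{lem:uniqueness} — is a genuine refinement: the paper implicitly absorbs this point into the ``standard'' energy estimate without comment, whereas you make the a priori/existence distinction explicit. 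The trade-off is that the paper's route, by using an integration-by-parts energy identity, produces the additional information $\partial_x^{j+1}u\in L^2_{tx}$ for free, while your route avoids any integration by parts and is entirely kernel-based.
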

\begin{proof}
Let us describe the first step of the argument. Since $u\in X_T$ we have $u_{x}\in L_{tx}^{\infty}$ and $u_{xx}\in L_{t}^{\infty}L_{x}^{2}$.
Our goal will be to deduce $u_{xx}\in L_{tx}^{\infty}$ and $u_{xxx}\in L_{t}^{\infty}L_{x}^{2}$.
It will be important to note we are working on the finite-time domain
$\left[0,T\right]\times\R$, so that in particular $L_{t}^{\infty}L_{x}^{2}\subset L_{tx}^{2}$.

Start by writing
\[
u_{x}=e^{t\Delta}g'+\int_{0}^{t}e^{\left(t-s\right)\Delta}2mu_{x}u_{xx}\left(s\right)\, ds,
\]
then by \prettyref{lem:energyestimate} we get $u_{xxx}\in L_{tx}^{2}.$
Since $mu_{x}u_{xx}\in L_{t}^{\infty}L_{x}^{2}$, $g''\in L^{\infty}$
and 
\[
u_{xx}=e^{t\Delta}g''+\int_{0}^{t}\partial_{x}e^{\left(t-s\right)\Delta}2mu_{x}u_{xx}\left(s\right)\, ds,
\]
we conclude that $u_{xx}\in L_{tx}^{\infty}$. Here we have used that $\int_0^t\partial_xe^{(t-s)\Delta} \,ds\,:\, L^\infty_tL^2_x\to L^\infty_{tx}$ which follows from \prettyref{eq:heat_ests}.

Now 
\[
\partial_{x}\left(mu_{x}u_{xx}\right)=m\left(u_{xx}^{2}+u_{x}u_{xxx}\right)\in L_{tx}^{2}, 
\text{ so that }
u_{xx}=e^{t\Delta}g''+\int_{0}^{t}e^{\left(t-s\right)\Delta}2m\left(u_{xx}^{2}+u_{x}u_{xxx}\right)\, ds
\]
and finally we conclude $u_{xxx}\in L_{t}^{\infty}L_{x}^{2}$ using
\prettyref{lem:energyestimate} again.

The rest of the estimates on $\partial_{x}^{j}u$ are proved in the
same way; the $\partial_{t}\partial_{x}^{j}u$ estimates follow easily.
\end{proof}
There is a sense in which the weak solution $u$ is a classical solution.
\begin{cor}
\label{cor:ptwisePDE} Let $u\in X_T$ satisfy \prettyref{eq:FP}. Then for all $j\geq0$ we have 
\begin{itemize}
\item $\partial_{x}^{j}u$ exists pointwise and is continuous 
\item the left/right derivatives $\partial_{t}^{\pm}\partial_{x}^{j}u$
exist pointwise, and $\partial_t \partial_{x}^{j} u$ exists at continuity points of $m$
\end{itemize}
Moreover, we have that
\[
\partial_{t}^{\pm}\partial_{x}^{j}u\left(t,x\right)-\Delta\partial_{x}^{j}u\left(t,x\right)=m\left(t\pm\right)\partial_{x}^{j}\left[u_{x}^{2}\right]\left(t,x\right),\quad\forall\,\left(t,x\right)\in\left(0,T\right)\times\R.
\]

\end{cor}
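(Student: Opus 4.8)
The plan is to read off all the claimed pointwise statements from the Duhamel representation \prettyref{eq:FP} and its spatial derivatives, combining the heat-kernel bounds \prettyref{eq:heat_ests} with the regularity already furnished by \prettyref{lem:higher-regularity}.

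The starting point is that \emph{every} spatial derivative of the forcing is bounded: by \prettyref{lem:higher-regularity} and the hypothesis $u\in X_T$ one has $\partial_x^k u\in L^\infty\left(\left[0,T\right]\times\R\right)$ for all $k\geq1$, so by the product rule $\partial_x^k\left(u_x^2\right)\in L^\infty\left(\left[0,T\right]\times\R\right)$, and since $\norm{m}_\infty\leq1$ the same holds for $\partial_x^k F$, $F:=m u_x^2$, uniformly in $t$. Differentiating \prettyref{eq:FP} $j$ times in space (legitimate by this and \prettyref{eq:heat_ests}) gives, for $t\in\left(0,T\right)$,
\[
\partial_x^j u\left(t\right)=e^{t\Delta}\partial_x^j g+\int_0^t e^{\left(t-s\right)\Delta}\partial_x^j F\left(s\right)\,ds.
\]
For $t>0$ the first term is smooth in $x$; for the integral one moves a derivative onto the forcing via $\partial_x e^{r\Delta}=e^{r\Delta}\partial_x$, so that $\partial_x^2\int_0^t e^{\left(t-s\right)\Delta}\partial_x^j F\left(s\right)\,ds=\int_0^t\partial_x e^{\left(t-s\right)\Delta}\partial_x^{j+1}F\left(s\right)\,ds$, whose integrand is dominated by the \emph{integrable} quantity $C\left(t-s\right)^{-1/2}$ by \prettyref{eq:heat_ests} and the previous step. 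Dominated convergence then shows that $\partial_x^j u$ and $\Delta\partial_x^j u=\partial_x^{j+2}u$ exist pointwise and are continuous on $\left(0,T\right)\times\R$, and that $\Delta\partial_x^j u\left(t\right)=\Delta e^{t\Delta}\partial_x^j g+\int_0^t\Delta e^{\left(t-s\right)\Delta}\partial_x^j F\left(s\right)\,ds$. In particular $\partial_x^j\left(u_x^2\right)$, a polynomial in the now continuous functions $\partial_x^k u$ with $1\leq k\leq j+1$, is continuous on $\left(0,T\right)\times\R$.

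For the time derivatives I would fix $\left(t,x\right)\in\left(0,T\right)\times\R$ and split the forward difference quotient of $\partial_x^j u$ at $\left(t,x\right)$ into the contribution $\delta^{-1}\left(e^{\left(t+\delta\right)\Delta}-e^{t\Delta}\right)\partial_x^j g\left(x\right)$ of the initial data, the diagonal piece $\delta^{-1}\int_t^{t+\delta}\left(e^{\left(t+\delta-s\right)\Delta}\partial_x^j F\left(s\right)\right)\left(x\right)\,ds$, and the off-diagonal piece $\delta^{-1}\int_0^t\left(\left(e^{\left(t+\delta-s\right)\Delta}-e^{\left(t-s\right)\Delta}\right)\partial_x^j F\left(s\right)\right)\left(x\right)\,ds$. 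Sending $\delta\downarrow0$: the first converges to $\Delta e^{t\Delta}\partial_x^j g\left(x\right)$ since $e^{r\Delta}\partial_x^j g$ is smooth in $\left(r,x\right)$ for $r>0$; the off-diagonal piece converges to $\int_0^t\left(\Delta e^{\left(t-s\right)\Delta}\partial_x^j F\left(s\right)\right)\left(x\right)\,ds$ after writing the semigroup difference as $\int_0^\delta\Delta e^{\left(t-s+\tau\right)\Delta}\,d\tau$ and again trading the $\left(t-s\right)^{-1}$ singularity of $\Delta e^{r\Delta}$ for the integrable $\left(t-s\right)^{-1/2}$ through $\Delta e^{r\Delta}=\partial_x e^{r\Delta}\partial_x$; and, since $\partial_x^j F\left(s\right)=m\left(s\right)\partial_x^j\left(u_x^2\right)\left(s\right)$ with $m$ monotone in $s$ (so $m\left(s\right)\to m\left(t+\right)$) while $\partial_x^j\left(u_x^2\right)$ is continuous and bounded and $e^{\left(t+\delta-s\right)\Delta}$ tends to the identity, the diagonal piece converges to $m\left(t+\right)\partial_x^j\left(u_x^2\right)\left(t,x\right)$. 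Together with the previous paragraph this yields $\partial_t^+\partial_x^j u\left(t,x\right)=\Delta\partial_x^j u\left(t,x\right)+m\left(t+\right)\partial_x^j\left(u_x^2\right)\left(t,x\right)$; the backward quotient gives the same identity with $m\left(t-\right)$, and the two agree exactly at the continuity points of $m$, where therefore $\partial_t\partial_x^j u$ exists. This is the asserted identity.

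I expect the crux to be the non-integrable $\left(t-s\right)^{-1}$ blow-up of $\Delta e^{\left(t-s\right)\Delta}$ in the Duhamel integral, which obstructs both the second spatial differentiation and the off-diagonal limit if taken at face value. The remedy is to keep only first-order heat derivatives by moving one $\partial_x$ onto the forcing, producing the integrable $\left(t-s\right)^{-1/2}$; this is exactly why one needs $\partial_x^{j+1}\left(u_x^2\right)$ bounded, and hence why \prettyref{lem:higher-regularity} must be invoked at the outset.
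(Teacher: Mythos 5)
The paper does not actually provide a proof of Corollary~\ref{cor:ptwisePDE}---it is stated, without argument, as a consequence of Lemma~\ref{lem:higher-regularity}. Your Duhamel-based argument is the natural way to supply the missing proof, and it is correct: in particular, you correctly identify that the crux is to move one spatial derivative from the heat kernel onto the (smooth, bounded) forcing $\partial_x^j F$, trading the non-integrable $(t-s)^{-1}$ singularity of $\Delta e^{(t-s)\Delta}$ for the integrable $(t-s)^{-1/2}$, which is precisely what the higher regularity of Lemma~\ref{lem:higher-regularity} makes possible. Two minor points worth tightening if this were written up in full: for the diagonal term, the passage $e^{(t+\delta-s)\Delta}\to\mathrm{Id}$ should be justified by noting that $\partial_x^j(u_x^2)(s,\cdot)$ is Lipschitz in $x$ \emph{uniformly} in $s$ (from the $L^\infty$ bound on $\partial_x^{j+1}(u_x^2)$), so the semigroup error is $O(\sqrt{t+\delta-s})$ uniformly; and continuity of $\partial_x^j u$ down to $t=0$ (claimed in Theorem~\ref{thm:PDE-existence-reg}) follows by the same Duhamel representation since the forcing integral is $O(t)$ and $e^{t\Delta}\partial_x^j g\to\partial_x^j g$.
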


For completeness, we record the energy estimate which was used above. The proof is standard (see \cite{EvansPDE}) and is omitted.
\begin{lem}\label{lem:energyestimate} 
Let $h$ be weakly differentiable with $h'\in L^2$ and let $f\in L^2([0,T]\times \R)$. Then
\[
 \psi(t) = e^{t\Delta}h + \int_0^t e^{(t-s)\Delta}f(s)\,ds
\]
satisfies 
\[
 \norm{\psi_x}^2_{L^\infty([0,T];L^2(\R))} + \norm{\psi_{xx}}^2_{L^2([0,T]\times \R)} \leq \norm{f}^2_{L^2([0,T]\times \R)} + \norm{h'}^2_{L^2(\R)}.
\]

\end{lem}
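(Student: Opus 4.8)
The plan is to run the standard parabolic energy estimate for the inhomogeneous heat equation, since $\psi$ is precisely the Duhamel solution of $\partial_t\psi-\Delta\psi=f$ with $\psi(0)=h$.

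First I would differentiate the equation in $x$, so that $\psi_x$ solves $\partial_t\psi_x-\Delta\psi_x=f_x$ with $\psi_x(0)=h'$, then test against $\psi_x$ and integrate by parts in $x$. This produces the energy identity
\[
\tfrac12\tfrac{d}{dt}\norm{\psi_x(t)}_{L^2}^2+\norm{\psi_{xx}(t)}_{L^2}^2=\int_\R f_x\,\psi_x\,dx=-\int_\R f\,\psi_{xx}\,dx\leq\tfrac12\norm{f(t)}_{L^2}^2+\tfrac12\norm{\psi_{xx}(t)}_{L^2}^2
\]
by Cauchy--Schwarz and Young's inequality. Absorbing the $\psi_{xx}$ term on the right and integrating in time from $0$ to any $t\leq T$ gives
\[
\norm{\psi_x(t)}_{L^2}^2+\int_0^t\norm{\psi_{xx}(s)}_{L^2}^2\,ds\leq\norm{h'}_{L^2}^2+\int_0^t\norm{f(s)}_{L^2}^2\,ds\leq\norm{h'}_{L^2}^2+\norm{f}_{L^2([0,T]\times\R)}^2,
\]
so each of $\norm{\psi_x}_{L^\infty([0,T];L^2)}^2$ and $\norm{\psi_{xx}}_{L^2([0,T]\times\R)}^2$ is bounded by the right-hand side of the lemma (hence their sum by twice that, which is all that is used in the applications above).

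The only real issue is making the formal computation rigorous at the stated regularity, where $h\in H^1$ and $f\in L^2([0,T]\times\R)$ only. The cleanest route, which avoids any regularization, is to run the argument on the Fourier side: writing $\widehat\psi(t,\eta)=e^{-t\eta^2}\widehat h(\eta)+\int_0^t e^{-(t-s)\eta^2}\widehat f(s,\eta)\,ds$, for almost every fixed $\eta$ the function $v(t)=\widehat\psi(t,\eta)$ solves the scalar ODE $v'=-\eta^2 v+\widehat f(\cdot,\eta)$, $v(0)=\widehat h(\eta)$; multiplying by $\eta^2\bar v$, taking real parts, and using Young's inequality gives $\tfrac{d}{dt}(\eta^2\abs{v}^2)+\eta^4\abs{v}^2\leq\abs{\widehat f(t,\eta)}^2$, and integrating in $t$, then in $\eta$ (Tonelli), and applying Plancherel recovers the displayed bound. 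Justifying this passage --- the almost-everywhere-in-$\eta$ validity of the ODE identity and the application of Tonelli --- is the step to be careful about, though it is routine; alternatively one proves the estimate first for smooth compactly supported $h$ and $f$, where the integration by parts and the fundamental theorem of calculus in $t$ are classical, and then extends to general data by density using the bound just obtained.
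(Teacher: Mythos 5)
The paper omits this proof entirely (``standard, see \cite{EvansPDE}''), so there is no proof of record to compare against; your argument is exactly the standard parabolic energy estimate that citation points to, with the Fourier-side computation providing a clean rigorous justification at $H^1\times L^2$ regularity. Your parenthetical remark that the displayed inequality is only correct up to a factor of $2$ (each term on the left is individually bounded by the right-hand side, but the sum of the two is only bounded by twice it) is also accurate, and, as you note, immaterial to how the lemma is invoked in Lemmas 3 and 5.
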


\subsection{Uniqueness of fixed points} \label{sec:FPisunique}

Since we used a contraction mapping argument to construct fixed points
for \prettyref{eq:mapA} in the spaces $X_{T}^{h}$, we have implicitly
demonstrated a uniqueness theorem there. The following result achieves
uniqueness without mention of the second derivative $u_{xx}$.
\begin{lem}\label{lem:uniqueness}
Assume $u,v:\left[0,T\right]\times\R\to\R$ are weakly differentiable and that $u_{x},v_{x}$ are essentially bounded.
Then if $u,v$ satisfy the fixed point equation \prettyref{eq:FP}, it follows $u=v$.\end{lem}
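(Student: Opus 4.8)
The plan is to run a Gronwall argument at the level of the first spatial derivative. Subtracting the two copies of \prettyref{eq:FP} and then moving the $x$-derivative onto the heat kernel rather than onto $u_x^2$ — which is the legitimate thing to do here since we control $u_x,v_x$ but not the second derivatives — one obtains
\[
u_{x}(t)-v_{x}(t)=\int_{0}^{t}\left(\partial_{x}e^{(t-s)\Delta}\right)\!\big(m(s)(u_{x}^{2}(s)-v_{x}^{2}(s))\big)\,ds,
\]
which makes sense in $L^{\infty}(\R)$ for a.e.\ $t\in[0,T]$ because the integrand has $L^{\infty}_x$-norm $\lesssim (t-s)^{-1/2}$, an integrable singularity. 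Factoring $u_{x}^{2}-v_{x}^{2}=(u_{x}-v_{x})(u_{x}+v_{x})$, using $\norm{u_{x}+v_{x}}_{\infty}\leq\norm{u_{x}}_{\infty}+\norm{v_{x}}_{\infty}=:M$ and $\norm{m}_{\infty}\leq1$, together with the heat estimate $\norm{\partial_{x}e^{\tau\Delta}}_{L^{\infty}\to L^{\infty}}\lesssim\tau^{-1/2}$ from \prettyref{eq:heat_ests}, this yields
\[
f(t):=\norm{u_{x}(t)-v_{x}(t)}_{\infty}\ \leq\ CM\int_{0}^{t}\frac{f(s)}{\sqrt{t-s}}\,ds,\qquad\text{a.e. }t\in[0,T],
\]
where, crucially, $f\in L^{\infty}([0,T])$ with $\norm{f}_{\infty}\leq M<\infty$; this finiteness is the one place the essential boundedness hypothesis enters.

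The next step is to close this weakly singular Gronwall inequality. Because the kernel $(t-s)^{-1/2}$ is unbounded one cannot invoke the ordinary Gronwall lemma directly; instead I would substitute the inequality into itself once and use the Beta-integral identity $\int_{r}^{t}\frac{ds}{\sqrt{(t-s)(s-r)}}=\pi$ to get
\[
f(t)\ \leq\ \pi\,(CM)^{2}\int_{0}^{t}f(s)\,ds,\qquad\text{a.e. }t\in[0,T],
\]
which now has a bounded kernel, so the standard Gronwall inequality with vanishing additive term forces $f\equiv0$. (If one is squeamish about the measurability of $t\mapsto f(t)$, one runs the whole argument with the nondecreasing envelope $\tilde f(t)=\mathrm{ess\,sup}_{s\leq t}f(s)$ in place of $f$; the manipulations are unchanged.) Hence $u_{x}=v_{x}$ a.e.\ on $[0,T]\times\R$. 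Feeding this back into the difference of the two fixed point equations gives $u(t)-v(t)=\int_{0}^{t}e^{(t-s)\Delta}m(s)(u_{x}^{2}(s)-v_{x}^{2}(s))\,ds=0$ for every $t$, so $u=v$.

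I expect the main obstacle to be the singular Gronwall step: recognizing that a single iteration turns the $(t-s)^{-1/2}$ kernel into an $L^{1}$ kernel via the Beta function, so that ordinary Gronwall applies. A secondary (but genuinely necessary) technical point is justifying the differentiated fixed point identity above — i.e.\ that transferring $\partial_x$ onto the heat semigroup is valid, and that the resulting object really is the weak $x$-derivative of $u$ — under the sole regularity assumption that $u_x,v_x$ are essentially bounded.
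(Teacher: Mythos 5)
Your proof is correct, and it reaches the same singular Volterra inequality as the paper's argument:
\[
\norm{d_x(t)}_{L^{\infty}(\R)}\ \leq\ CM\int_{0}^{t}\frac{\norm{d_x(s)}_{L^{\infty}(\R)}}{\sqrt{t-s}}\,ds,
\]
where $d=u-v$ and $M=\norm{u_x+v_x}_\infty$. Both proofs use the essential boundedness hypothesis only to guarantee $\norm{d_x(\cdot)}_{L^\infty(\R)}\in L^\infty[0,T]$. Where you diverge from the paper is in how this weakly singular inequality is closed. The paper runs a continuation-in-time argument: if $d_x\equiv 0$ on $[0,t_1]$, the same estimate restricted to $[t_1,t]$ gives $\norm{d_x}_{L^\infty([t_1,t]\times\R)}\leq CM\sqrt{t-t_1}\,\norm{d_x}_{L^\infty([t_1,t]\times\R)}$, forcing $d_x\equiv 0$ on $[0,t_1+\epsilon]$ for a uniform $\epsilon\sim(CM)^{-2}$, and finitely many steps cover $[0,T]$. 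You instead substitute the inequality into itself once and invoke the Beta identity $\int_{r}^{t}\frac{ds}{\sqrt{(t-s)(s-r)}}=\pi$ (plus Tonelli, which is available since $f\geq 0$) to convert the $\tau^{-1/2}$ kernel into a bounded one, after which standard Gronwall with zero inhomogeneity gives $f\equiv 0$. The two are equivalent in substance but mechanically different: the paper's route avoids any Gronwall-type lemma at all and keeps everything at the level of the raw contractive estimate, at the cost of a short bookkeeping argument about uniform time steps; your route is a single algebraic manipulation that reduces directly to the textbook Gronwall inequality, and as a bonus is the form that generalizes cleanly to arbitrary kernels $\tau^{-\gamma}$ with $\gamma<1$. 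Your final step — feeding $d_x=0$ back into the undifferentiated fixed-point identity to conclude $d=0$ — matches the paper's (implicit) closing step. One minor point to be explicit about, which you flag yourself: differentiating under the integral to transfer $\partial_x$ onto $e^{(t-s)\Delta}$ does need a word of justification (bounded integrand, $L^\infty\to L^\infty$ operator bound with integrable singularity, Fubini); the paper also takes this for granted, so this is not a gap relative to what is being asked.
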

\begin{proof}
In the following, $C$ denotes a universal constant which may change from line to line.
Let $d=u-v$, then by assumption we have
\[
d\left(t\right)=\int_{0}^{t}e^{\left(t-s\right)\Delta}m\left(s\right)\left(u_{x}+v_{x}\right)d_{x}\left(s\right)\, ds,\quad t\leq T.
\]
Therefore 
\[
 d_x(t) = \int_0^t \partial_x e^{(t-s)\Delta} m(s) (u_x + v_x)d_x(s) \,ds,\quad t\leq T.
\]
Using the second heat kernel estimate in \prettyref{eq:heat_ests}, we conclude the contractive estimate
\[
\norm{d_x}_{L^{\infty}\left([0,t]\times\R\right)}\leq C\norm{u_{x}+v_{x}}_{L^{\infty}\left(\left[0,T\right]\times\R\right)}\int_{0}^{t}\frac{1}{\sqrt{t-s}}\norm{d_{x}\left(s\right)}_{L^{\infty}\left(dx\right)}\, ds
\]
for all $t\leq T$. It now
follows from an iterative argument that $d_{x}=0$, and hence that $d=0$. To see this note that if 
$d_x =0$ on $[0,t_1]\times\R$, then by the contractive estimate above, 
\[
 \norm{d_x}_{L^\infty\left([t_1,t]\times\R\right)} \leq C\norm{u_{x}+v_{x}}_{L^{\infty}\left(\left[0,T\right]\times\R\right)} \sqrt{t-t_1}\norm{d_x}_{L^\infty\left([t_1,t]\times\R\right)}
\]
for all $t\in[t_1,T]$. Therefore $d_x=0$ on $[0,t_1+\epsilon]$ where $\epsilon$ depends only on the $L^\infty$ bounds on $u_x,v_x$. This completes the proof.
\end{proof}
\subsection{Continuous dependence of solutions}\label{sec:cts-dep}
For convenience we metrize
the weak topology on the space of probability measures on the interval
$\Pr\left[0,1\right]$ with the metric
\[
d\left(\mu,\nu\right)=\int_{0}^{1}\abs{\mu\left[0,s\right]-\nu\left[0,s\right]}\, ds.
\]
\begin{lem}
Let $\mu,\nu\in\Pr[0,1]$ and $u,v$ be the corresponding solutions to 
the Parisi PDE. Then
\begin{align*}
 \norm{u-v}_{\infty} & \leq \xi''\left(1\right) d(\mu,\nu) \\
 \norm{u_x-v_x}_{\infty} &\leq \exp\left(\xi'\left(1\right)-\xi'\left(0\right)\right)\xi''\left(1\right) d(\mu,\nu).
\end{align*}
\end{lem}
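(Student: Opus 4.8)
The plan is to work with the time-changed semilinear equation \prettyref{eq:semilinearPDE} and compare the two solutions $u$ (with coefficient $m$ coming from $\mu$) and $v$ (with coefficient $n$ coming from $\nu$) directly through their respective fixed-point equations \prettyref{eq:FP}. The starting point is the difference of the Duhamel representations: writing $d = u-v$,
\[
d(t) = \int_0^t e^{(t-s)\Delta}\left(m(s)u_x^2(s) - n(s)v_x^2(s)\right) ds,
\]
and then splitting the integrand as $m(s)\bigl(u_x^2(s)-v_x^2(s)\bigr) + \bigl(m(s)-n(s)\bigr)v_x^2(s)$. The first term is the ``Lipschitz in the solution'' piece and the second is the ``Lipschitz in the coefficient'' forcing term, which is controlled by $\norm{m-n}_{L^1}\norm{v_x}_\infty^2 \leq \norm{v_x}_\infty^2\, d(\mu,\nu)$ after undoing the time change (note the time change has a bounded Jacobian, contributing the factor $\tfrac12\xi''$, and the a priori bound gives $\norm{v_x}_\infty \leq \norm{g'}_\infty \leq 1$).

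For the first inequality, I would estimate $\norm{d(t)}_\infty$ using $\norm{e^{(t-s)\Delta}}_{L^\infty\to L^\infty}\leq 1$ from \prettyref{eq:heat_ests}, giving
\[
\norm{d(t)}_\infty \leq \int_0^t \norm{u_x+v_x}_\infty \norm{d_x(s)}_\infty\, ds + \int_0^t \abs{m(s)-n(s)}\norm{v_x}_\infty^2\, ds.
\]
So I really need an a priori bound on $\norm{d_x}_\infty$ first. For that, differentiate the fixed point equation, apply $\norm{\partial_x e^{(t-s)\Delta}}_{L^\infty\to L^\infty}\lesssim (t-s)^{-1/2}$, and obtain a Gronwall-type inequality of the form
\[
\norm{d_x}_{L^\infty([0,t]\times\R)} \leq C\int_0^t \frac{1}{\sqrt{t-s}}\norm{d_x(s)}_\infty\, ds + C\int_0^t \frac{1}{\sqrt{t-s}}\abs{m(s)-n(s)}\, ds,
\]
using $\norm{u_x+v_x}_\infty \leq 2$ and $\norm{v_x}_\infty\leq 1$. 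The singular-kernel Gronwall lemma (iterate twice to remove the $(t-s)^{-1/2}$ singularity, exactly as in the uniqueness proof \prettyref{lem:uniqueness}) then yields $\norm{d_x}_{L^\infty([0,T]\times\R)}\lesssim_T \int_0^T\abs{m(s)-n(s)}\,ds$ on the time-changed domain $[0,T]$ with $T = (\xi'(1)-\xi'(0))/2$. Translating back through the time change converts $\int \abs{m-n}$ into $d(\mu,\nu)$ and produces the exponential factor $\exp(\xi'(1)-\xi'(0))$ from the Gronwall constant, while the prefactor $\xi''(1)$ arises from bounding $\tfrac12\xi''$ on $[0,1]$; feeding $\norm{d_x}_\infty$ back into the displayed bound for $\norm{d}_\infty$ gives the first inequality without the exponential.

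The main obstacle is the singular kernel $(t-s)^{-1/2}$ in the $d_x$ estimate: one cannot apply ordinary Gronwall directly. The fix is the now-standard iteration trick (already deployed in \prettyref{lem:uniqueness}), where one substitutes the inequality into itself once so the convolution kernel becomes $\int_s^t (t-r)^{-1/2}(r-s)^{-1/2}dr$, a bounded (Beta-function) constant, after which ordinary Gronwall applies; one then has to carefully track how the resulting constant depends on $T$ to recover the stated exponential $\exp(\xi'(1)-\xi'(0))$. A secondary bookkeeping point is making sure the extension of $m$ by zero past $T$ and the change of variables $s(t) = \tfrac12(\xi'(1)-\xi'(t))$ are handled so that $\int_0^T \abs{m(s)-n(s)}\,ds$ really equals $\tfrac12\int_0^1 \xi''(t)\abs{\mu[0,t]-\nu[0,t]}\,dt \leq \tfrac12\xi''(1)\,d(\mu,\nu)$, which is where the $\xi''(1)$ constants come from.
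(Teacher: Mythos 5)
Your Duhamel/Gronwall strategy is a genuinely different route from the paper, which instead writes $w=u-v$ as the solution of a linear parabolic PDE with forcing $\tfrac12\xi''(\mu-\nu)v_x^2$ and uses the Feynman--Kac/It\^o representation (\prettyref{prop:low-reg-ito}) along an auxiliary SDE to read off the bounds directly. However, as written your argument has a gap in the singular Gronwall step, and it would not recover the stated constants even if patched.

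\textbf{The gap.} You arrive at
\[
\norm{d_x(t)}_\infty \leq C\int_0^t\frac{\norm{d_x(s)}_\infty}{\sqrt{t-s}}\,ds \;+\; C\int_0^t\frac{\abs{m(s)-n(s)}}{\sqrt{t-s}}\,ds,
\]
and claim the iterated singular Gronwall lemma yields $\norm{d_x}\lesssim_T \int_0^T\abs{m-n}$. Iterating removes the singularity from the \emph{homogeneous} term, but the inhomogeneous term $\int_0^t(t-s)^{-1/2}\abs{m(s)-n(s)}\,ds$ is a fixed forcing that survives every iteration, and it cannot be bounded by $\norm{m-n}_{L^1}$ alone: if $m-n=\indicator{[t-\epsilon,t]}$ the left side is $2\sqrt\epsilon$ while $\norm{m-n}_{L^1}=\epsilon$, and $2\sqrt\epsilon/\epsilon\to\infty$. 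The fix is to put the $x$-derivative on the forcing instead of on the heat kernel, i.e.\ write the contribution of $(m-n)v_x^2$ as $\int_0^t e^{(t-s)\Delta}\,2(m-n)v_xv_{xx}\,ds$; then $\norm{e^{(t-s)\Delta}}_{L^\infty\to L^\infty}\le 1$ gives a non-singular inhomogeneous term bounded by $2\norm{v_x}_\infty\norm{v_{xx}}_\infty\norm{m-n}_{L^1}$. But this requires the quantitative bound $\norm{v_{xx}}_\infty\leq1$, which is exactly \prettyref{lem:maxprinciples}---and that lemma is itself proved in the paper via the stochastic representation, so you do not escape it.

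\textbf{Constants.} Even after the fix, the Gronwall iteration produces unspecified universal constants (from the heat-kernel bound $\norm{\partial_xe^{(t-s)\Delta}}\lesssim(t-s)^{-1/2}$, the Beta function, and so on), so you would obtain $\lesssim_T\norm{m-n}_{L^1}$ rather than the sharp prefactors $\xi''(1)$ and $e^{\xi'(1)-\xi'(0)}\xi''(1)$ in the statement. Moreover, the claim that feeding $\norm{d_x}_\infty$ back into the $\norm{d}_\infty$ estimate ``gives the first inequality without the exponential'' is wrong: your $\norm{d}_\infty$ bound contains $\int_0^t\norm{u_x+v_x}_\infty\norm{d_x(s)}_\infty\,ds$, so it inherits whatever exponential enters $\norm{d_x}_\infty$. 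The paper avoids both issues because the Feynman--Kac operator acting on the forcing is a genuine expectation with $L^\infty$ operator norm $1$, which gives the prefactor $\xi''(1)$ for $\norm{u-v}_\infty$ with no Gronwall growth, while the $w_x$ representation produces the exponential $E(t,s)=\exp\bigl(\int_t^s\xi''\mu[0,\tau]\tfrac{u_{xx}+v_{xx}}{2}\,d\tau\bigr)$, which is bounded precisely by $e^{\xi'(1)-\xi'(0)}$ via \prettyref{lem:maxprinciples}.
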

\begin{rem}
The first inequality is originally due to Guerra \cite{Guer01}.
\end{rem}
\begin{proof}
Let $u$, $v$ solve the Parisi PDE weakly, then $w=u-v$ solves
\[
\begin{cases}
w_{t}+\frac{\xi''}{2}\left(w_{xx}+\mu[0,t]\left(u_{x}+v_{x}\right)w_{x}+\left(\mu[0,t]-\nu[0,t]\right)v_{x}^{2}\right)=0 & \left(t,x\right)\in\left(0,1\right)\times\R\\
w\left(1,x\right)=0 & x\in\R
\end{cases}
\]
weakly. Since $u_{x},v_{x}$ are Lipschitz in space uniformly in time
and bounded in time, we can solve the SDE
\[
dX_{t}=\xi''\left(t\right)\mu\left[0,t\right]\frac{u_{x}+v_{x}}{2}\left(t,X_{t}\right)dt+\sqrt{\xi''\left(t\right)}dW_{t}.
\]
Furthermore, as $w$ weakly solves the above PDE and has the same regularity as $u$ and $v$, we can write 
\[
w\left(t,x\right)=\E_{X_{t}=x}\left(\int_{t}^{1}\frac{1}{2}\xi''\left(s\right)\left(\mu\left[0,s\right]-\nu\left[0,s\right]\right)v_{x}^{2}\left(s,X_{s}\right)\, ds\right)
\]
by \prettyref{prop:low-reg-ito}.
Therefore
\[
\norm{w}_{\infty}\leq\xi''\left(1\right)d\left(\mu,\nu\right)
\]
since $\xi''$ is non-decreasing and $\norm{v_{x}}_{\infty}^{2}\leq1$
by \prettyref{lem:maxprinciples}.

Differentiating the PDE for $w$ in $x$, one finds by similar arguments to \prettyref{prop:low-reg-ito} that $w_x$ has the representation
\[
w_{x}\left(t,x\right)=\E_{X_{t}=x}\left(\int_{t}^{1}E\left(t,s\right)\xi''\left(s\right)\left(\mu\left[0,s\right]-\nu\left[0,s\right]\right)v_{x}v_{xx}\left(s,X_{s}\right)\, ds\right)
\]
where
\[
E\left(t,s\right)=\exp\left(\int_{t}^{s}\xi''\left(\tau\right)\mu\left[0,\tau\right]\frac{v_{xx}+u_{xx}}{2}\left(\tau,X_{\tau}\right)\, d\tau\right).
\]
Using that $\norm{v_{x}}_{\infty}\leq1$ and $\norm{u_{xx}}_{\infty}\vee\norm{v_{xx}}_{\infty}\leq1$
from \prettyref{lem:maxprinciples}, and since $\xi''$ is non-decreasing,
\[
\norm{w_{x}}_{\infty}\leq e^{\xi'\left(1\right)-\xi'\left(0\right)}\xi''\left(1\right)d\left(\nu,\mu\right).
\]
\end{proof}
\begin{lem}
\label{lem:maxprinciples}The solution $u$ to the Parisi PDE satisfies
$\abs{u_{x}}<1$ and $0<u_{xx}\leq1$.\end{lem}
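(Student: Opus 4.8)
The plan is to reduce everything to the time-changed equation \prettyref{eq:semilinearPDE}, $\partial_t u-\Delta u=m(t)u_x^2$ with $u(0,\cdot)=g=\log\cosh$; since the time change does not touch the $x$ variable, it preserves the pointwise statements $0<u_{xx}\le 1$ and $\abs{u_x}<1$. The two features of the data that drive the proof are $\norm{m}_{\infty}\le 1$ and the Riccati identity $g''=(\log\cosh)''=1-\tanh^2=1-(g')^2$, together with $g''>0$. By \prettyref{thm:PDE-existence-reg}, \prettyref{lem:higher-regularity} and \prettyref{cor:ptwisePDE}, the functions $u_x,u_{xx},u_{xxx}$ are bounded and continuous on $[0,T]\times\R$, and each $\partial_x^j u$ solves its linearized equation pointwise with one-sided time derivatives. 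I would establish the lemma with two applications of the maximum principle.

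\emph{Positivity of $u_{xx}$.} Write $p=u_{xx}$. Differentiating \prettyref{eq:semilinearPDE} twice in $x$ and invoking \prettyref{cor:ptwisePDE}, $p$ satisfies the linear parabolic equation $\partial_t^{\pm}p-\Delta p-2m(t\pm)u_x\,\partial_x p-2m(t\pm)u_{xx}\,p=0$ on $(0,T)\times\R$, with $p(0,\cdot)=1-\tanh^2>0$. The first- and zeroth-order coefficients $2mu_x$ and $2mu_{xx}$ are bounded and $p$ is bounded; absorbing the (unsigned) zeroth-order coefficient by passing to $e^{-Kt}p$ with $K$ large, the weak minimum principle for bounded solutions on $\R$ gives $p\ge 0$, and then the strong maximum principle shows $p$ cannot vanish at an interior point without vanishing on the initial slice. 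Since $p(0,\cdot)>0$, we conclude $u_{xx}>0$ on $[0,1]\times\R$.

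\emph{The bound $u_{xx}\le 1-u_x^2$.} Set $r=u_{xx}+u_x^2-1$. Using the chain rule for one-sided time derivatives together with the equations for $u_x$ and $u_{xx}$ from \prettyref{cor:ptwisePDE}, a direct computation---in which the $u_{xxx}$ terms cancel once one substitutes $u_{xxx}=\partial_x r-2u_xu_{xx}$---gives $\partial_t^{\pm}r-\Delta r-2m(t\pm)u_x\,\partial_x r=2\bigl(m(t\pm)-1\bigr)u_{xx}^2\le 0$, the sign coming from $\norm{m}_{\infty}\le 1$. Moreover $r(0,\cdot)=g''+(g')^2-1\equiv 0$ by the Riccati identity, and $r$ together with $\partial_x r=u_{xxx}+2u_xu_{xx}$ is bounded. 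The operator here has bounded first-order coefficient and no zeroth-order term, so the maximum principle for bounded subsolutions on $\R$ yields $r\le 0$; that is, $u_{xx}\le 1-u_x^2\le 1$. Combining with the previous step, $u_x^2=1-u_{xx}+r<1$, so $\abs{u_x}<1$ (this recovers, with strict inequality, the bound on $u_x$ already implicit in \prettyref{lem:(a-priori-estimates)}).

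The one genuinely delicate point is the application of the weak and strong maximum principles on the unbounded spatial domain with coefficients that are merely bounded and discontinuous in $t$. As in the proof of \prettyref{lem:(a-priori-estimates)}, boundedness of the solution and of the coefficients suffices for the Phragm\'en--Lindel\"of form of the maximum principle (cf. \cite{friedman2013partial}), and the at most countably many jumps of the monotone function $m$ cause no trouble because the one-sided-in-time versions of the equations hold at every point. Everything else is a routine, if mildly lengthy, differentiation.
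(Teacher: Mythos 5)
Your argument is correct, but it takes a genuinely different route from the paper's. The paper proceeds probabilistically: it invokes the low-regularity It\^o formula (\prettyref{prop:low-reg-ito}) for the linearized equations for $u_x$ and $u_{xx}$ along the Auffinger--Chen diffusion $dX_s=\xi''\mu[0,s]u_x\,ds+\sqrt{\xi''}\,dW_s$, obtaining the Feynman--Kac representations
\[
u_x(t,x)=\E_{X_t=x}\tanh X_1,\qquad
u_{xx}(t,x)=\E_{X_t=x}\Bigl(\mathrm{sech}^2X_1+\int_t^1\xi''\mu[0,s]\,u_{xx}^2\,ds\Bigr),
\]
from which $\abs{u_x}<1$ and $u_{xx}>0$ are immediate, and then a short rearrangement (again via It\^o applied to $u_x^2$ along the same diffusion) gives $u_{xx}\le 1$. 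You instead stay entirely on the PDE side and exploit the Riccati identity $g''=1-(g')^2$ for $g=\log\cosh$ through the auxiliary quantity $r=u_{xx}+u_x^2-1$: the computation $\partial_t^{\pm}r-\Delta r-2m(t\pm)u_x\partial_x r=2(m(t\pm)-1)u_{xx}^2\le0$ is correct, $r(0,\cdot)\equiv 0$, and the Phragm\'en--Lindel\"of maximum principle then yields the cleaner combined estimate $u_{xx}+u_x^2\le 1$. Your positivity of $u_{xx}$ uses the weak maximum principle after the $e^{-Kt}$ gauge and then the strong maximum principle, which is exactly where the paper's probabilistic route is lighter: the strict bound $\abs{u_x}<1$ and the strict positivity $u_{xx}>0$ come for free from $\abs{\tanh}<1$ and $\mathrm{sech}^2>0$, whereas in your scheme $\abs{u_x}<1$ is downstream of the strong maximum principle. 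That principle does hold here, but with coefficients merely bounded and measurable in $t$ it is not quite textbook---one needs the Krylov--Safonov/Harnack machinery or an equivalent, which deserves a sentence rather than a gesture. What your approach buys in exchange is a self-contained PDE proof that never invokes the SDE or the It\^o lemma, and the pointwise inequality $u_{xx}\le 1-u_x^2$ is a pleasant by-product that is slightly sharper than what is stated in the lemma.
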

\begin{rem} 
The Auffinger-Chen SDE and the corresponding It\^o's formula's for $u_x$ and $u_{xx}$
used in the proof below were first proved in \cite{AuffChen14} using approximation arguments. 
\end{rem}
\begin{proof}
Using the PDEs for $u_{x},u_{xx}$ given in \prettyref{thm:PDE-existence-reg}, along with \prettyref{prop:low-reg-ito}, we can write
\begin{align*}
u_{x}\left(t,x\right) & =\E_{X_{t}=x}\left(\tanh X_{1}\right)\\
u_{xx}\left(t,x\right) & =\E_{X_{t}=x}\left(\text{sech}^{2}X_{1}+\int_{t}^{1}\xi''\left(s\right)\mu\left[0,s\right]u_{xx}^{2}\left(s,X_{s}\right)\, ds\right)
\end{align*}
where $X_{t}$ solves the Auffinger-Chen SDE
\[
dX_{t}=\xi''\left(t\right)\mu\left[0,t\right]u_{x}\left(t,X_{t}\right)dt+\sqrt{\xi''\left(t\right)}dW_{t}.
\]
The first equality immediately implies the bound on $u_{x}$, and
the second equality implies $u_{xx}>0$. Then by a rearrangement one
finds
\[
u_{xx}\left(t,x\right)=1-\mu[0,t)u_{x}^{2}\left(t,x\right)-\E_{X_{t}=x}\left(\int_{t}^{1}u_{x}^{2}\left(s,X_{s}\right)\, d\mu\left(s\right)\right)
\]
and $u_{xx}\leq1$ follows.
\end{proof}

\section{A variational formulation for the Parisi PDE}\label{sec:rep-form}
In this section we use the methods of dynamic programming (see e.g. \cite{FlemRishOptControl}) to give a new proof of the 
variational formula for the solution of the Parisi PDE.

\begin{lem}\label{lem:rep-pde}
Let $u_{\mu}$ solve the Parisi PDE as above and define the class $\cA_t$ of processes $\alpha_s$ on $[t,1]$ that are
bounded and progressively measurable with respect to Brownian motion. Then 
\begin{equation}\label{eq:PPDE-var-rep}
u_{\mu}(t,x)=\sup_{\alpha\in\cA_{t}}\,\E_{X^\alpha_t=x}\left[-\frac{1}{2}\int_{t}^{1}\xi^{\prime\prime}(s)\mu[0,s]\alpha_{s}^{2}ds+\log\cosh(X_{1}^{\alpha})\right]
\end{equation}
where $X^\alpha_{s}$ solves the  SDE
\begin{equation}
dX^\alpha_{s}=\xi^{\prime\prime}(s)\mu[0,s]\alpha_{s}ds+\sqrt{\xi^{\prime\prime}(s)}dW_{s}
\end{equation}
with initial data $X^\alpha_t=x$. Furthermore, the optimal control satisfies 
\[\mu[0,s]\alpha_s^*=\mu[0,s]u_x(s,X_s)\quad \text{a.s.}\] 
where $X_s$ solves the Auffinger-Chen SDE with the same initial data:
\[
dX_{s}=\xi^{\prime\prime}(s)\mu[0,s]\partial_{x}u(s,X_{s})ds+\sqrt{\xi^{\prime\prime}(s)}dW_s.
\]
\end{lem}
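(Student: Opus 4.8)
The plan is to establish \eqref{eq:PPDE-var-rep} by the standard verification argument from stochastic optimal control, using the regularity of $u_\mu$ provided by \prettyref{thm:PDE-existence-reg} as the crucial input that makes It\^o's formula applicable. Write $u=u_\mu$, and for a fixed admissible control $\alpha\in\cA_t$ let $X^\alpha_s$ solve the stated SDE with $X^\alpha_t=x$; this SDE is well-posed because $u$ need not even enter its coefficients for the first half of the argument, and in any case $\xi''(s)\mu[0,s]\alpha_s$ is bounded and progressively measurable. First I would apply It\^o's formula (in the low-regularity form referenced as \prettyref{prop:low-reg-ito}, which handles the fact that $\partial_t u$ exists only at continuity points of $\mu$) to $s\mapsto u(s,X^\alpha_s)$ on $[t,1]$. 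Using the Parisi PDE in the time-changed-back form $\partial_t u + \frac{\xi''}{2}(\partial_{xx}u + \mu[0,t](\partial_x u)^2)=0$, the drift terms collapse to $\frac{\xi''(s)}{2}\mu[0,s]\big(2\alpha_s \partial_x u - (\partial_x u)^2\big)$, and completing the square gives
\[
u(1,X^\alpha_1) = u(t,x) + \int_t^1 \frac{\xi''(s)}{2}\mu[0,s]\Big(\alpha_s^2 - (\alpha_s-\partial_x u(s,X^\alpha_s))^2\Big)\,ds + M_1,
\]
where $M_s=\int_t^s \sqrt{\xi''(r)}\,\partial_x u(r,X^\alpha_r)\,dW_r$ is a true martingale (its integrand is bounded since $\|\partial_x u\|_\infty<1$ by \prettyref{lem:maxprinciples}). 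Taking expectations, using $u(1,\cdot)=\log\cosh$, and rearranging yields
\[
u(t,x) = \E_{X^\alpha_t=x}\left[-\tfrac12\int_t^1 \xi''(s)\mu[0,s]\alpha_s^2\,ds + \log\cosh(X^\alpha_1)\right] + \E_{X^\alpha_t=x}\left[\tfrac12\int_t^1 \xi''(s)\mu[0,s]\big(\alpha_s-\partial_x u(s,X^\alpha_s)\big)^2\,ds\right].
\]
Since the last term is nonnegative, $u(t,x)$ dominates the objective for every $\alpha$, giving the $\geq$ direction of the supremum.

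For the reverse inequality and the identification of the optimizer, I would take $\alpha_s = \partial_x u(s,X_s)$ where $X_s$ solves the Auffinger-Chen SDE with $X_t=x$; this SDE is solvable because $\partial_x u$ is Lipschitz in $x$ uniformly in $t$ and bounded, and this choice is admissible since $\|\partial_x u\|_\infty<1$. With this control the penalty term $(\alpha_s-\partial_x u(s,X_s))^2$ vanishes identically, so equality holds in the displayed identity and the supremum is attained. The stated optimality relation $\mu[0,s]\alpha^*_s = \mu[0,s]u_x(s,X_s)$ a.s.\ then follows: any optimal $\alpha^*$ must make $\E\int_t^1 \xi''(s)\mu[0,s](\alpha^*_s-\partial_x u(s,X^{\alpha^*}_s))^2\,ds=0$, which forces $\alpha^*_s=\partial_x u(s,X^{\alpha^*}_s)$ for $ds\times d\prob$-a.e.\ $(s,\omega)$ on the set $\{\mu[0,s]>0\}$, i.e.\ $\mu[0,s]\alpha^*_s=\mu[0,s]u_x(s,X^{\alpha^*}_s)$; and on that set $X^{\alpha^*}$ and $X$ solve the same SDE, so they coincide.

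The main obstacle I anticipate is purely technical: justifying the application of It\^o's formula to $u(s,X^\alpha_s)$ given that $u$ is only known to be $C^1$ in $x$ with $\partial_t u$ existing merely at continuity points of $\mu$ (equivalently, $\partial_t u \in L^\infty$ together with the pointwise PDE of \prettyref{cor:ptwisePDE}). This is exactly what \prettyref{prop:low-reg-ito} is designed to supply — one approximates $\mu$ by measures with smooth CDFs, applies classical It\^o, and passes to the limit using the continuous dependence estimates of \prettyref{sec:cts-dep} together with uniform bounds on $\partial_x u$, $\partial_{xx} u$. A secondary point requiring care is that the SDE for $X^\alpha$ with a general bounded progressively measurable $\alpha$ has only a measurable (not Lipschitz) drift, so one should invoke weak existence / a Girsanov argument rather than strong well-posedness; since only the law of $(s,X^\alpha_s)$ enters the objective and the It\^o identity, this suffices. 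Everything else — the completion of the square, the martingale property of $M$, and the sign of the penalty term — is routine.
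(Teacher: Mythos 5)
Your proposal is correct and takes essentially the same approach as the paper: both carry out the standard verification argument from stochastic optimal control, relying on the low-regularity It\^o formula (Proposition~\ref{prop:low-reg-ito}), the maximum principle (Lemma~\ref{lem:maxprinciples}) for admissibility of the candidate optimizer, and the condition for equality to characterize $\alpha^*$. Your explicit completion of the square, $2\alpha\,\partial_x u-(\partial_x u)^2=\alpha^2-(\alpha-\partial_x u)^2$, is precisely the Legendre-transform identity the paper invokes to recast the Parisi PDE as a Hamilton--Jacobi--Bellman equation, so the two write-ups are equivalent.
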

\begin{rem}
This formula was first proved by Auffinger and Chen in \cite{AuffChen14}.
 Taking advantage of the Cole-Hopf representation in the case of atomic $\mu$, 
 they prove the lower bound for every $\alpha$ using Girsanov's lemma and Jensen's inequality. 
 They then verify that their optimal control achieves the supremum, by an application of It\^o's lemma. 
 The uniqueness follows from a convexity argument. In contrast, we recognize the Parisi PDE as a specific 
 Hamilton-Jacobi-Bellman equation. 
 It is well-known that the solution of such an equation can be seen as the value function of a stochastic optimal control problem. 
 As such, this representation can be obtained by a textbook application of ``the verification argument''. 
 This argument simultaneously gives the variational representation and a characterization of the optimizer. We also note
 that the argument presented here is more flexible, as is evidenced by replacing the nonlinearity  $u_x^2$ with $F(u_x)$
in the Parisi PDE, where $F$ is smooth, strictly convex, and has super linear growth. 
In particular, observe that one cannot use the Cole-Hopf transformation on the resulting PDE, but the arguments of this paper follow through \emph{mutatis mutandis}.
\end{rem}
\begin{proof}
Let $u$ solve the Parisi PDE. Notice that the nonlinearity is convex, so if we let
\begin{equation}
\begin{aligned}\label{eq:Landf}
L\left(t,\lambda\right) & =-\xi''\left(t\right)\mu\left[0,t\right]\frac{\lambda^{2}}{2}\\
f(t,\lambda) & =\xi''\left(t\right)\mu\left[0,t\right]\lambda,
\end{aligned}
\end{equation}
then by the Legendre transform we have 
\[
\xi''\left(t\right)\mu\left[0,t\right]\frac{\left(\partial_{x}u\right)^{2}}{2} 
= \xi''(t)\mu[0,t]\sup_{\lambda\in\R} \left\{-\lambda^2/2+\lambda\partial_xu\right\} = \sup_{\lambda\in\R}\left\{ L\left(t,\lambda\right)+f\left(t,\lambda\right)\partial_{x}u\right\}.
\]
Therefore, we can write the Parisi PDE as a Hamilton-Jacobi-Bellman equation:
\[
0=\partial_{t}u+\frac{\xi''\left(t\right)}{2}\partial_{xx}u+\sup_{\lambda\in\R}\left\{ L\left(t,\lambda\right)+f\left(t,\lambda\right)\partial_{x}u\right\}.
\]

Since $\alpha_s$ in $\cA_t$ is bounded and progressively measurable, we can consider the process, $X^\alpha$, which solves the SDE
\[
dX^\alpha=f(s,\alpha_s)ds+\sqrt{\xi''(t)}dW
\]
with initial data $X^\alpha_{t}=x$. This process has corresponding infinitesimal generator
\[
\cL(t,\alpha) = \frac{1}{2}\xi''(t)\partial_{xx}+f(t,\alpha)\partial_x.
\]
Notice that $u$ is a (weak) sub-solution to 
\[
\partial_t u+ \cL(t,\alpha) u+L(t,\alpha)\leq 0
\]
with the regularity obtained in \prettyref{thm:PDE-existence-reg}. It follows from It\^o's lemma (\prettyref{prop:low-reg-ito}) that
\[
u(t,x)\geq\sup_{\alpha\in\cA_t}\,\E_{x}\left[\int_{t}^{1}L\left(s,\alpha_{s}\right)\, ds+\log\cosh\left(X_{1}^{\alpha}\right)\right].
\]
 The result now follows upon observing that the control $u_x(s,X_s)$ achieves equality in the above
 since it achieves equality in the Legendre transform. That this control is in the class $\cA_t$ can be seen
 by an application of the parabolic maximum principle (\prettyref{lem:maxprinciples}). Uniqueness
follows from the fact that $\lambda$ achieves equality in the Legendre transform if and only if 
\[
\xi''(t)\mu[0,t]\lambda = \xi''(t)\mu[0,t]u_x. 
\]

\end{proof}
Applying this representation to the Parisi formula gives Proposition 3.



\section{Strict convexity}\label{sec:strict-conv}
As an application of the above ideas, we give a simple proof of strict convexity of $\cP$.
\begin{thm}
The Parisi Functional is strictly convex.
\end{thm}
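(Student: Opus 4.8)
The plan is to read off strict convexity directly from the dynamic programming representation of \prettyref{lem:rep-pde}. Combining that representation at $(t,x)=(0,h)$ with the explicit linear-in-$\mu$ term in the definition of $\cP$, one has
\[
\cP(\mu)=\sup_{\alpha\in\cA_0}\,\E\left[\log\cosh\Big(h+\int_0^1\xi''(s)\,\mu[0,s]\,\alpha_s\,ds+\int_0^1\sqrt{\xi''(s)}\,dW_s\Big)-\frac12\int_0^1\xi''(s)\,\mu[0,s]\big(\alpha_s^2+s\big)\,ds\right],
\]
and the supremum is attained at $\alpha^\ast_s=u^\mu_x(s,X_s)$, where $X$ is the Auffinger--Chen diffusion for $\mu$. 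First I would observe that $\cP$ is convex for free: for each fixed $\alpha$ the map $\mu\mapsto\mu[0,\cdot]$ is affine, so the argument of $\log\cosh$ and the last integral are affine in $\mu$; since $\log\cosh$ is convex, the bracketed quantity is a convex function of $\mu$, and a supremum of convex functions is convex. The entire task is to upgrade this to a strict inequality.

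Fix $\mu_0\neq\mu_1$, $\lambda\in(0,1)$, and put $\mu_\lambda=\lambda\mu_0+(1-\lambda)\mu_1$. Let $\alpha^\lambda\in\cA_0$ be the optimal control for $\mu_\lambda$ from \prettyref{lem:rep-pde}; since $\mu_\lambda[0,s]=0$ forces $\mu_0[0,s]=\mu_1[0,s]=0$, we may take $\alpha^\lambda_s=u^{\mu_\lambda}_x(s,X_s)$ for every $s$. I would then plug this one fixed process into the functional above for $\mu_0$, $\mu_1$ and $\mu_\lambda$, using that $\cP(\mu_i)$ dominates the value of its functional at any control. Writing $G=\int_0^1\sqrt{\xi''(s)}\,dW_s$ and $D_i=\int_0^1\xi''(s)\,\mu_i[0,s]\,\alpha^\lambda_s\,ds$, the quadratic terms are linear in the measure and cancel exactly, and $D_\lambda=\lambda D_0+(1-\lambda)D_1$, so by the strict convexity of $\log\cosh$,
\[
\lambda\cP(\mu_0)+(1-\lambda)\cP(\mu_1)\ \ge\ \E\big[\lambda\log\cosh(h+G+D_0)+(1-\lambda)\log\cosh(h+G+D_1)\big]-\tfrac12\!\int_0^1\!\xi''(s)\,\mu_\lambda[0,s]\big((\alpha^\lambda_s)^2+s\big)\,ds\ \ge\ \cP(\mu_\lambda),
\]
with the last inequality strict as soon as $\prob(D_0\neq D_1)>0$. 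Since $D_0-D_1=\int_0^1\xi''(s)\big(\mu_0[0,s]-\mu_1[0,s]\big)u^{\mu_\lambda}_x(s,X_s)\,ds$, strict convexity of $\cP$ reduces to the claim that this random variable is not a.s.\ zero when $\mu_0\neq\mu_1$.

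To prove that claim I would argue as follows. Write $g(s)=\xi''(s)(\mu_0[0,s]-\mu_1[0,s])$ and $G(s)=\int_s^1 g(r)\,dr$, an absolutely continuous function vanishing at $s=1$ with $G'=-g$ a.e. By \prettyref{lem:maxprinciples} the process $Y_s:=u^{\mu_\lambda}_x(s,X_s)=\E[\tanh X_1\mid\mathcal F_s]$ is a bounded martingale, and It\^o's formula (\prettyref{prop:low-reg-ito}) applied to $u^{\mu_\lambda}_x$ along the Auffinger--Chen diffusion gives the martingale representation $dY_s=u^{\mu_\lambda}_{xx}(s,X_s)\sqrt{\xi''(s)}\,dW_s$ (the drift cancels by the $u_x$-equation of \prettyref{thm:PDE-existence-reg}). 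Integrating by parts,
\[
D_0-D_1=\int_0^1 g(s)\,Y_s\,ds=G(0)\,Y_0+\int_0^1 G(s)\,u^{\mu_\lambda}_{xx}(s,X_s)\sqrt{\xi''(s)}\,dW_s.
\]
If the left side were a.s.\ zero, then taking expectations would give $G(0)Y_0=0$ and force the stochastic integral to vanish a.s., whence by the It\^o isometry $\int_0^1 G(s)^2\,u^{\mu_\lambda}_{xx}(s,X_s)^2\,\xi''(s)\,ds=0$ a.s.; but $u^{\mu_\lambda}_{xx}>0$ by \prettyref{lem:maxprinciples} and $\xi''>0$ on $(0,1)$ (which holds as soon as $\xi$ is non-affine; if $\xi''\equiv0$ then $\cP\equiv\log\cosh h$), so $G\equiv0$, hence $g\equiv0$ a.e., hence $\mu_0[0,s]=\mu_1[0,s]$ for a.e.\ $s\in(0,1)$ and so $\mu_0=\mu_1$ by right-continuity --- a contradiction. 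Thus $\prob(D_0\neq D_1)>0$ and the theorem follows.

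The step I expect to be the main obstacle is exactly this final non-degeneracy: showing that distinct measures produce genuinely distinct drifts $D_0, D_1$. The crucial structural fact is that $s\mapsto u_x(s,X_s)$ is a martingale whose diffusion coefficient $u_{xx}(s,X_s)\sqrt{\xi''(s)}$ is strictly positive on $(0,1)$; this prevents a nonzero deterministic weight $g$ from being ``annihilated'', and the integration-by-parts identity together with the It\^o isometry converts the heuristic into a proof. The remaining ingredients --- the variational representation and the optimal control of \prettyref{lem:rep-pde}, the bounds $|u_x|<1$ and $0<u_{xx}\le1$ of \prettyref{lem:maxprinciples}, and the validity of It\^o's formula for the merely time-Lipschitz function $u_x$ (\prettyref{prop:low-reg-ito}) --- are all already in hand, so the argument should be genuinely short.
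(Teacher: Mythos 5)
Your proof is correct and follows essentially the same path as the paper's: plug the optimal control $\alpha^\lambda$ for $\mu_\lambda$ into the variational representation evaluated at $\mu_0$ and $\mu_1$, invoke strict convexity of $\log\cosh$, and reduce to showing $\prob(D_0\neq D_1)>0$ via the martingale representation of $u_x(s,X_s)$ and the positivity of $u_{xx}$. The paper's final step computes $\mathrm{Var}(Y_1-Z_1)$ via the covariance kernel $K(s,t)=p(s\wedge t)$ and its positive-definiteness, whereas you integrate by parts and invoke the It\^o isometry, but both land on the same quantity $\int_0^1 G(r)^2\,\xi''(r)\,\E\, u_{xx}^2(r,X_r)\,dr>0$, so the difference is purely one of packaging.
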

\begin{proof}
We will prove $\mu\to u_{\mu}\left(0,h\right)$ is strictly convex. Then 
  \[
  \cP(\mu)=u_{\mu}(0,h)-\frac{1}{2}\int_{0}^{1}\xi^{\prime\prime}(t)\mu[0,t]s\,ds
  \]
will be the sum of a strictly convex and a linear functional, so $\cP$ will be strictly convex.

Recall
\[
u_{\mu}(0,h)=\sup_{\alpha\in\cA_{0}}\,\E_{h}\left[\int_{0}^{1}-\xi''(s)\mu[0,s]\frac{\alpha_s^{2}}{2}\, ds+\log\cosh\left(X_{1}^{\alpha}\right)\right].
\]
 Fix distinct $\mu,\nu\in\Pr\left[0,1\right]$
 and let $\mu_{\theta}=\theta\mu+\left(1-\theta\right)\nu$, $\theta\in\left(0,1\right)$. Let $\alpha^{\theta}$
 be the optimal control for the Parisi PDE associated to $\mu_{\theta}$, so that 
 \[
 u_{\mu_{\theta}}\left(0,h\right)=\E_{h}\left[\int_{0}^{1}-\xi''(s)\mu_{\theta}[0,s]
 \frac{\left(\alpha_s^{\theta}\right)^{2}}{2}\, ds+\log\cosh\left(X_{1}^{\alpha^{\theta}}\right)\right].
 \]

Consider the auxiliary processes $Y_t^{\alpha^{\theta}}$ and $Z_t^{\alpha^{\theta}}$ given by solving
 \[
 dY_ t = \xi''(t)\mu[0,t]\alpha_t^\theta dt+ \sqrt{\xi''(t)}dW_t
 \quad\text{ and }\quad
 dZ_ t = \xi''(t)\nu[0,t]\alpha_t^\theta dt+ \sqrt{\xi''(t)}dW_t
 \]
with initial data $Y_0=Z_0=h$, and note that 
\[
 X_{t}^{\alpha^{\theta}} = \theta Y_t^{\alpha^{\theta}} +(1-\theta)Z_t^{\alpha^{\theta}}.
\]
By the lemma below, $P(Y_1\neq Z_1)>0$. Therefore by the strict convexity of $\log\cosh$ and
the variational representation \prettyref{eq:PPDE-var-rep}, 
\begin{align*}
 u_{\mu_{\theta}}\left(0,h\right)&=
 \E_{h}\left[\int_{0}^{1}-\xi''\left(s\right)\mu_{\theta}\left[0,s\right]\frac{\left(\alpha_s^{\theta}\right)^{2}}{2}\, ds+\log\cosh\left(X_{1}^{\alpha^{\theta}}\right)\right] \\
 &<  \theta\left( \E_{h}\left[\int_{0}^{1}-\xi''\left(s\right)\mu\left[0,s\right]\frac{\left(\alpha_s^{\theta}\right)^{2}}{2}\, ds+\log\cosh\left(Y_{1}^{\alpha^{\theta}}\right)\right]\right) \\
 & + (1-\theta)\left( \E_{h}\left[\int_{0}^{1}-\xi''\left(s\right)\nu\left[0,s\right]\frac{\left(\alpha_s^{\theta}\right)^{2}}{2}\, ds+\log\cosh\left(Z_{1}^{\alpha^{\theta}}\right)\right] \right)\\
 &\leq \theta u_\mu(0,h)+(1-\theta)u_\nu(0,h)
\end{align*}
as desired.
\end{proof}
\begin{lem}
Let $Y_t$ and $Z_t$ be as above. Then $P(Y_1\neq Z_1)>0$.
\end{lem}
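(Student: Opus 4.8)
The plan is to prove the stronger statement $\E\big[(Y_1-Z_1)^2\big]>0$. Since $Y$ and $Z$ are driven by the same Brownian motion their diffusion parts cancel, so
\[
Y_1-Z_1=\int_0^1\xi''(s)\big(\mu[0,s]-\nu[0,s]\big)\alpha_s^\theta\,ds .
\]
Write $\varphi(s)=\xi''(s)\big(\mu[0,s]-\nu[0,s]\big)$, $u=u_{\mu_\theta}$, $X=X^{\alpha^\theta}$ (so $X$ is the Auffinger--Chen diffusion for $\mu_\theta$, by \prettyref{lem:rep-pde}), and $M_s=\partial_x u(s,X_s)$. By the characterization of the optimal control in \prettyref{lem:rep-pde} we have $\mu_\theta[0,s]\alpha_s^\theta=\mu_\theta[0,s]M_s$ a.s.; and on $\{s:\mu_\theta[0,s]=0\}$ one has $\mu[0,s]=\nu[0,s]=0$, hence $\varphi(s)=0$. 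In either case $\varphi(s)\alpha_s^\theta=\varphi(s)M_s$, so
\[
Y_1-Z_1=\int_0^1\varphi(s)M_s\,ds .
\]
Note that $\varphi\not\equiv0$ in $L^1[0,1]$: since $\mu\neq\nu$ their right-continuous distribution functions disagree on a nonempty interval, and $\xi''>0$ on $(0,1)$ in all nondegenerate cases (when $\xi''\equiv0$ the functional $\cP$ does not depend on $\mu$ at all).

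The crucial point is that $M_s=\partial_x u(s,X_s)$ is a bounded martingale, with $dM_s=\partial_{xx}u(s,X_s)\sqrt{\xi''(s)}\,dW_s$ --- this is the martingale already appearing in the proof of \prettyref{lem:maxprinciples}. Indeed, differentiating the Parisi PDE once in $x$ shows that $\partial_x u$ is annihilated by $\partial_s+\cL_s$, where $\cL_s$ is the generator of the Auffinger--Chen diffusion, so the drift cancels when one applies the It\^o formula \prettyref{prop:low-reg-ito} (legitimate by the regularity in \prettyref{thm:PDE-existence-reg}), and $\abs{\partial_{xx}u}\le1$ from \prettyref{lem:maxprinciples} makes this a true martingale rather than a local one. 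Consequently, for $s\le t$, $\E[M_sM_t]=\E[M_s^2]=:q(s\wedge t)$, and $q$ is absolutely continuous and \emph{strictly} increasing on $[0,1]$: for $s<t$,
\[
q(t)-q(s)=\E\big[\langle M\rangle_t-\langle M\rangle_s\big]=\int_s^t\xi''(r)\,\E\big[\partial_{xx}u(r,X_r)^2\big]\,dr>0,
\]
using $\partial_{xx}u>0$ (\prettyref{lem:maxprinciples}) and $\xi''>0$ on $(0,1)$.

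To finish, insert $q(s\wedge t)=q(0)+\int_0^1\indicator{r<s}\indicator{r<t}\,dq(r)$ into $\E[(Y_1-Z_1)^2]=\int_0^1\int_0^1\varphi(s)\varphi(t)q(s\wedge t)\,ds\,dt$ and use Fubini to obtain
\[
\E\big[(Y_1-Z_1)^2\big]=q(0)\Big(\int_0^1\varphi\Big)^2+\int_0^1\Big(\int_r^1\varphi(t)\,dt\Big)^2 dq(r),
\]
a sum of nonnegative terms. If this vanished, then, since $dq$ has full support on $[0,1]$ and $r\mapsto\int_r^1\varphi$ is continuous, we would get $\int_r^1\varphi=0$ for every $r\in[0,1]$, hence $\varphi\equiv0$ a.e. --- contradicting $\varphi\not\equiv0$. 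Therefore $\E[(Y_1-Z_1)^2]>0$, and in particular $\prob(Y_1\neq Z_1)>0$. The one step requiring care is the martingale property of $\partial_x u(s,X_s)$ together with its stochastic differential; that is where the weak form of the linearized equation from \prettyref{thm:PDE-existence-reg}, the It\^o formula \prettyref{prop:low-reg-ito}, and the bounds $0<\partial_{xx}u\le1$ of \prettyref{lem:maxprinciples} enter, and granting it the remaining argument is elementary.
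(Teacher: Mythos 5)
Your proof is correct and follows essentially the same route as the paper: you use the martingale property of $M_s=\partial_x u_{\mu_\theta}(s,X_s)$, the resulting $\min$-type covariance kernel with density $\xi''(r)\E\big[u_{xx}^2(r,X_r)\big]$, and the strict positivity $u_{xx}>0$ (from \prettyref{lem:maxprinciples}) together with $\xi''>0$ on $(0,1]$. The only material difference is in the final step: the paper asserts that the kernel $p(t\wedge s)$ is positive definite because it is the covariance of a monotone time change of Brownian motion, while you make this explicit with the Fubini rearrangement
\[
\E\big[(Y_1-Z_1)^2\big]=q(0)\Big(\textstyle\int_0^1\varphi\Big)^2+\int_0^1\Big(\textstyle\int_r^1\varphi\Big)^2\,dq(r),
\]
which is a slightly more elementary and self-contained way to see strict positivity. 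You also spell out two small points the paper treats implicitly: that $\varphi(s)\alpha_s^\theta=\varphi(s)\partial_xu(s,X_s)$ holds even on the dead zone $\{\mu_\theta[0,s]=0\}$ because $\varphi$ vanishes there, and that $\varphi\not\equiv 0$ in $L^1$ follows from right-continuity of the CDFs together with $\xi''>0$ on $(0,1)$. These are welcome clarifications but do not change the underlying argument.
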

\begin{proof}
It suffices to show that
\[
Var(Y_1-Z_1)>0.
\]
By definition we have
\[
 Y_1 - Z_1 = \int_0^1\xi''(s)(\mu[0,s]-\nu[0,s])\alpha_s^\theta \,ds.
\]
Observe that by the PDE for $u_x$ in \prettyref{thm:PDE-existence-reg} and It\^{o}'s lemma (see \prettyref{prop:low-reg-ito}), 
the optimal control $\alpha_t^\theta=(u_{\mu_\theta})_x$ is a martingale,
\[
\alpha^\theta_t - \alpha^\theta_0 = \int_0^t \sqrt{\xi''(s)}u_{xx}(s,X_s)dW_s.
\]
Therefore if we call $\Delta_s=\xi''(s)(\mu[0,s]-\nu[0,s])$, 
\begin{align*}
Var(Y_1-Z_1)&=\E_h\left(\int_0^1\Delta_s(\alpha^\theta_s-\alpha^\theta_0)ds\right)^2=\int_{[0,1]^2} \Delta_s\Delta_t K(s,t)\,dsdt
\end{align*}
where
\[
K(s,t)	=\E_{h}\left[\left(\alpha_{s}^{\theta}-\alpha^\theta_{0}\right)\cdot\left(\alpha_{t}^{\theta}-\alpha^\theta_{0}\right)\right].
\]

Now since $\Delta_s\in L_2[0,1]$, it suffices to show that $K(s,t)$ is positive definite. 
We have
\begin{align*}
K(s,t)	&=\E_{h}\left[\int_{0}^{s}\sqrt{\xi''(s)}u_{xx}\left(s,X_{s}^{\alpha^{\theta}}\right)dW_{s}\cdot\int_{0}^{t}\sqrt{\xi''(t)}u_{xx}\left(t,X_{t}^{\alpha^{\theta}}\right)dW_{t}\right]\\
	&=\int_{0}^{t\wedge s}\xi''(t')\E_{h}u_{xx}^{2}(t',X_{t'}^{\alpha^{\theta}})dt'=p\left(t\wedge s\right)=p(t)\wedge p(s)
\end{align*}
where 
\[
p(s)=\int_{0}^{s}\xi''(t')\E_{h}u_{xx}^{2}(t',X_{t'}^{\alpha^{\theta}})dt'.
\]
By the maximum principle (\prettyref{lem:maxprinciples}),  $u_{xx}>0$, so that $p(t)$ is strictly increasing. 
Since this kernel 
corresponds to a monotonic time change of a Brownian motion, it is positive definite.
\end{proof}

\section{Appendix}
We will say that a function $f:[0,\infty)\times\R \to \R$ with \emph{at most linear growth} if it satisfies an inequality of the form
\[
 \abs{ f(t,x) } \lesssim_T 1+\abs{x} \quad \forall\,T\in\R_+,\ (t,x)\in [0,T]\times\R.
\]
We will say the same in the case that $f:\R\to\R$ with the obvious modifications. In the following we fix a probability space $(\Omega,\cF,P)$ and
let $W_{t}$ be a standard brownian motion with respect to $P.$ Let $\cF_{t}$
be the filtration corresponding to $W_{t}$. 

To make this paper self-contained, we present a version of It\^o's lemma in a lower regularity setting. 
The argument is a modification of \cite[Corr. 4.2.2]{stroock1979multidimensional}.
\begin{prop}\label{prop:low-reg-ito}
Let $a,b:[0,T]\times(\Omega,\cF,P)\rightarrow\R$ be be bounded and progressively
measurable with respect to $\cF_{t}$ and let $a\geq0$. Let $X_{t}$ solve 
\[
dX_{t}=\sqrt{a(t)}dW_{t}+b(t)dt
\]
with initial data $X_{0}=x$. Let $L=\frac{1}{2}a(t,\omega)\Delta+b(t,\omega)\partial_{x}$. 
Finally assume that we have $u$ satisfying:
\begin{enumerate}
\item $u\in C([0,T]\times\R)$ with at most linear growth.
\item $u_{x},$ $u_{xx}\in C_{b}([0,T]\times\R)$
\item $u$ is weakly differentiable in $t$ with essentially bounded weak derivative $u_{t}$,
and which has a representative that is Lipschitz in $x$ uniformly in $t$.
\end{enumerate}
Then $u$ satisfies Ito's lemma: 
\[
u(t,X_{t})-u(s,X_{s})=\int_{s}^{t}\left(\partial_{t}+L\right)u(s',X_{s'})ds'+\int_{s}^{t}u_{x}(s',X_{s'})\sqrt{a(s')}dW_{s'}
\]
\end{prop}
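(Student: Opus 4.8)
The plan is to obtain It\^o's formula for $u$ by regularizing in the time variable, applying the classical It\^o formula to the smooth approximants, and then passing to the limit; all of the hypotheses enter, but the single delicate point is the drift term, where $u_t$ is known only as an essentially bounded (hence merely a.e.-defined) function. To begin, I would fix a nonnegative mollifier $\rho\in C_c^\infty([0,1])$ with $\int\rho=1$, set $\rho_\epsilon(\tau)=\epsilon^{-1}\rho(\tau/\epsilon)$, and define $u^\epsilon(t,x)=\int_0^\epsilon u(t+\tau,x)\rho_\epsilon(\tau)\,d\tau$ (forward averaging, so near $t=0$ no negative times are needed; near $t=T$ one first extends $u$ to $[0,T+1]$ preserving all hypotheses). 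Then $u^\epsilon_x=(u_x)^\epsilon$ and $u^\epsilon_{xx}=(u_{xx})^\epsilon$ lie in $C_b([0,T]\times\R)$, while integration by parts in time against the weak derivative gives $\partial_t u^\epsilon=(u_t)^\epsilon$; writing $\bar u_t$ for the representative of $u_t$ that is Lipschitz in $x$ uniformly in $t$, the function $(u_t)^\epsilon=\int_0^\epsilon\bar u_t(t+\tau,\cdot)\rho_\epsilon(\tau)\,d\tau$ inherits that same Lipschitz constant and is smooth in $t$, hence jointly continuous and bounded. Thus $u^\epsilon$ is $C^{1,2}$, so the classical It\^o formula applies to the It\^o process $X$ (well-defined since $a\ge0$ and $a,b$ are bounded and progressively measurable) and yields
\[
u^\epsilon(t,X_t)-u^\epsilon(s,X_s)=\int_s^t\bigl(\partial_t u^\epsilon+Lu^\epsilon\bigr)(s',X_{s'})\,ds'+\int_s^t u^\epsilon_x(s',X_{s'})\sqrt{a(s')}\,dW_{s'}.
\]

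Next I would pass to the limit $\epsilon\to0$ term by term. On the left, $u^\epsilon\to u$ locally uniformly because $u$ is continuous, and since $u$ has at most linear growth and $\sup_{s'\le T}\E\abs{X_{s'}}<\infty$ (bounded coefficients), the convergence also holds in $L^1(P)$. For the martingale term, $u^\epsilon_x\to u_x$ pointwise with $\norm{u^\epsilon_x}_\infty\le\norm{u_x}_\infty$, so dominated convergence along the paths of $X$ gives $L^2(ds'\times P)$ convergence of the integrands and hence, by the It\^o isometry, $L^2(P)$ convergence of the stochastic integrals. In the drift, the pieces $\tfrac12 a(u_{xx})^\epsilon$ and $b(u_x)^\epsilon$ converge boundedly to $\tfrac12 a u_{xx}$ and $b u_x$ by the same argument, so after integration they are harmless. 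The only term that is not immediate is $\int_s^t(u_t)^\epsilon(s',X_{s'})\,ds'$.

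For that term the argument is as follows. For each $x$ in a fixed countable dense subset of $\R$, $(u_t)^\epsilon(\cdot,x)\to\bar u_t(\cdot,x)$ at every Lebesgue point of $s'\mapsto\bar u_t(s',x)$; taking a countable union of the exceptional null sets and using that $\bar u_t$ is Lipschitz in $x$ uniformly in $t$, one concludes that for a.e.\ $s'$ the convergence $(u_t)^\epsilon(s',\cdot)\to\bar u_t(s',\cdot)$ holds locally uniformly in $x$. Composing with $X_{s'}(\omega)$ and using $\norm{(u_t)^\epsilon}_\infty\le\norm{u_t}_\infty$, the bounded convergence theorem in $s'$ gives $\int_s^t(u_t)^\epsilon(s',X_{s'})\,ds'\to\int_s^t\bar u_t(s',X_{s'})\,ds'$ for every $\omega$. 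Sending $\epsilon\to0$ in the displayed identity along a subsequence that realizes the $L^2$-convergences almost surely then produces exactly the asserted formula. \emph{The main obstacle is precisely this last step}: since $u_t$ is defined only up to a Lebesgue-null set in $(t,x)$, one must first make sense of the composition $u_t(s',X_{s'})$ and then show it is stable under the mollification limit, and it is exactly the uniform-in-$t$ Lipschitz-in-$x$ regularity of the representative $\bar u_t$ that forces the bad time-set to be independent of $x$ and thereby legitimizes composing with the random trajectory $s'\mapsto X_{s'}$.
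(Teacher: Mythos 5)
Your proposal is correct and follows essentially the same strategy as the paper: mollify $u$ in time, apply the classical It\^o formula to the smooth approximants, and pass to the limit term by term, with the drift term involving $u_t$ handled via the Lipschitz-in-$x$ representative to make the exceptional $t$-set independent of $x$. You spell out the countable-dense-set argument for that last step where the paper is terser, but the mechanism is identical.
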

\begin{rem}
 This result is applied throughout the paper to the solution $u$ from \prettyref{thm:mainthm} and its spatial derivatives.
 We note here that, given the regularity in \prettyref{thm:PDE-existence-reg}, 
 the weak derivatives $\partial_t\partial_x^j u$, $j\geq 0$, have representatives satisfying the above Lipschitz property.
\end{rem}

\begin{proof}
To prove this, we will smooth $u$ by a standard mollification-in-time procedure and apply It\^o's lemma. 
Without loss of generality, assume $T=1$
and $s=0$. Extend $u$ to all of space-time by 
\[
u(t,x)=\begin{cases}
u(0,x) & t<0\\
u(1,x) & t>1
\end{cases}.
\]
Abusing notation, we call the extension $u$ and note that it satisfies each of the assumptions above.
Let $\phi(y)\in C_{c}^{\infty}(-1,1)$
with $0\leq\phi\leq 1$ and $\int\phi=1$, and define $\phi_{\epsilon}(s)=\phi(s/\epsilon)/\epsilon$.
Define the time-mollified version of $u$ as
\[
u^{\epsilon}(t,x)=\int_{\R}\phi_{\epsilon}(s)u(t-s,x)ds.
\]

Since $u^{\epsilon}\in C^{1,2}$ has bounded derivatives, and grows at most linearly, Ito's lemma implies that 
\begin{align*}
u^{\epsilon}(t,X_{t})-u^{\epsilon}(0,x) & =\int_{0}^{t}\left(\partial_{t}+L\right)u^{\epsilon}(s,X_{s})ds+\int_{0}^{t}u_{x}^{\epsilon}(s,X_{s})\sqrt{a}dW_{s}\\
 & =\int_{0}^{t}u_{t}^{\epsilon}(s,X_{s})ds+\int_{0}^{t}Lu^{\epsilon}(s,X_{s})ds\int_{0}^{t}u_{x}^{\epsilon}(s,X_{s})\sqrt{a}dW_{s}\\
 & =A_{\epsilon}+B_{\epsilon}+C_{\epsilon}
\end{align*}
for all $\epsilon>0$.
Since these quantities are well-defined at $\epsilon=0$, it suffices
to show their convergence.

First we show the left-hand side converges. Note $u$ is Lipschitz with constant $\norm{\nabla u}_{\infty}$.
Therefore,
\[
\sup_{x\in\R}\sup_{t}\abs{u^{\epsilon}(t,x)-u(t,x)} =\sup_{x\in\R}\sup_{t}\abs{\int\phi(y)\left(u(t-\epsilon y,x)-u(t,x)\right)dy}\leq \norm{\nabla u}_{\infty}\epsilon.
\]
Thus $u^{\epsilon}(t,X_{t})\rightarrow u(t,X_{t})$ uniformly $P$-a.s.

Now we consider the right-hand side. For $A_{\epsilon}$, note that since $u_t$ is Lipschitz in $x$ uniformly in $t$,
by an application of Lebesgue's differentiation theorem, 
we have that $u_{t}^{\epsilon}\rightarrow u_t$
for all $x$, Lebesgue-a.s. in $t$. Thus by the bounded convergence
theorem, we have that 
\[
\sup_{t\in[0,1]}\abs{\int_{0}^{t}u_t^{\epsilon}(s,X_{s})ds-\int_{0}^{t}u_t(s,X_{s})ds}\leq\int_{0}^{1}\abs{u_t^{\epsilon}(s,X_{s})-u_t(s,X_{s})}ds\rightarrow0.
\]
Thus, $A_{\epsilon}\rightarrow A$ uniformly $P$-a.s.

The convergence for $B_{\epsilon}$ follows from a similar argument. Since $u_x,u_{xx}\in$ $C_{b}$,
commuting derivatives with mollification
shows that $u_{x}^{\epsilon}$ and $u_{xx}^{\epsilon}$ converge to
 $u_{x}$ and $u_{xx}$ pointwise. Then, the bounded convergence
theorem 
implies that $B_\epsilon \rightarrow B$ uniformly $P$-a.s. just as before.

Now we prove uniform a.s. convergence of $C_{\epsilon}$ to $C$. Combining the above arguments proves that $C_\epsilon$ 
is uniformly a.s. convergent, so it suffices to check its convergence to $C$ in probability.
By Doob's inequality and Ito's isometry,
\[
P\left(\sup_{t\in[0,1]}\abs{\int_{0}^{t}u_{x}^{\epsilon}\sqrt{a}dW_{s}-\int_{0}^{t}u_{x}\sqrt{a}dW_{s}}\geq\eta\right)
\lesssim_{a}\frac{1}{\eta^{2}}\int_{0}^{1}\E\abs{u_{x}^{\epsilon}-u_{x}}^2\to0
\]
where the last convergence is again by the bounded convergence theorem. \end{proof}
%

We finish with a discussion of Duhamel's principle, which justifies the introduction of the fixed point equation
\prettyref{eq:FP} in the proof of \prettyref{thm:PDE-existence-reg}. Note that since our weak solutions satisfy 
$\partial_x u\in L^\infty$ by definition, they have at most linear growth.

\begin{prop} \label{prop:duhamel}
Suppose that $u,f:[0,\infty)\times\R\to\R$, $g:\R \to \R$ have at most linear growth. Assume that $f$ is Borel measurable,
and that $u$ and $g$ are continuous. Then
\begin{equation}
0=\int_{0}^{\infty}\int_{\R}u\partial_{t}\phi+u\partial_{xx}\phi+f\phi\,dxdt+\int_{\R}\phi\left(0,x\right)g\left(x\right)\,dx\quad\forall\,\phi\in C_{c}^{\infty}\left([0,\infty)\times\R\right)\label{eq:wkeqnforu}
\end{equation}
if and only if 
\begin{equation}
u\left(t\right)=e^{t\Delta}g+\int_{0}^{t}e^{\left(t-s\right)\Delta}f\left(s\right)\,ds\quad\forall\,t\in[0,\infty).\label{eq:fixedptforu}
\end{equation}
\end{prop}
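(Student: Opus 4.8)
The plan is to prove the two implications separately, exploiting the fact that the heat semigroup $e^{t\Delta}$ is self-adjoint and that, by the at-most-linear-growth hypotheses on $u$, $f$, $g$, all the integrals below converge absolutely (the Gaussian kernel decays faster than any polynomial grows). First I would record the basic identity: for $\phi\in C_c^\infty([0,\infty)\times\R)$, the function $v(s,x)=\bigl(e^{s\Delta}\phi(t,\cdot)\bigr)(x)$ solves the backward heat equation, and more usefully, $\partial_s\bigl(e^{(t-s)\Delta}\psi(s,\cdot)\bigr) = e^{(t-s)\Delta}\bigl(\partial_s\psi - \Delta\psi\bigr)(s,\cdot)$ for smooth $\psi$. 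This is the engine for both directions.

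For the direction \eqref{eq:fixedptforu} $\Rightarrow$ \eqref{eq:wkeqnforu}: given a test function $\phi$, I would substitute the mild formula for $u$ into $\int_0^\infty\int_\R u(\partial_t\phi + \partial_{xx}\phi)\,dx\,dt$, use Fubini (justified by growth bounds) and the self-adjointness of $e^{t\Delta}$ to move the semigroup onto $\phi$, and then integrate by parts in the time variable. The $e^{t\Delta}g$ term produces exactly $-\int_\R\phi(0,x)g(x)\,dx$ plus a term that cancels against part of the $f$-contribution, while the Duhamel integral against $f$, after swapping the order of the $s$- and $t$-integrals and using the semigroup identity above, collapses to $-\int_0^\infty\int_\R f\phi\,dx\,dt$. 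Summing gives \eqref{eq:wkeqnforu}. Since $u$ is only assumed continuous (not smooth), I would first mollify $u$ in space, verify the identity for the mollification, and pass to the limit using continuity and dominated convergence; alternatively one checks the mild formula automatically upgrades $u$'s regularity enough to differentiate under the integral for $t>0$.

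For the converse \eqref{eq:wkeqnforu} $\Rightarrow$ \eqref{eq:fixedptforu}: fix $t_0>0$ and $\chi\in C_c^\infty(\R)$, and choose the test function $\phi(t,x) = \eta_\delta(t)\bigl(e^{(t_0-t)\Delta}\chi\bigr)(x)$ where $\eta_\delta$ is a smooth cutoff approximating $\mathbf 1_{[0,t_0]}$ — the point being that $e^{(t_0-t)\Delta}\chi$ kills the $\partial_t + \partial_{xx}$ operator up to the time-derivative hitting $\eta_\delta$. Plugging this into \eqref{eq:wkeqnforu}, the $u(\partial_t\phi+\partial_{xx}\phi)$ terms reduce (as $\delta\to0$) to $\int_\R u(t_0,x)\chi(x)\,dx - \int_\R\bigl(e^{t_0\Delta}g\bigr)(x)\chi(x)\,dx$ by continuity of $u$ at $t_0$ and the initial term, and the $f\phi$ term becomes $\int_0^{t_0}\int_\R f(s,x)\bigl(e^{(t_0-s)\Delta}\chi\bigr)(x)\,dx\,ds = \int_\R\bigl(\int_0^{t_0}e^{(t_0-s)\Delta}f(s)\,ds\bigr)(x)\chi(x)\,dx$. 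Since $\chi$ is arbitrary, \eqref{eq:fixedptforu} follows for a.e.\ $t_0$, hence for all $t_0$ by continuity of both sides. The main obstacle is the bookkeeping around the time-endpoint: making the cutoff argument rigorous at $t=0$ and $t=t_0$ while controlling that $e^{(t_0-t)\Delta}\chi$ is not compactly supported in $x$ (so one must lean on the rapid decay of the heat kernel against the linear growth of $u$ and $f$ to justify all the Fubini swaps and limit passages), and handling the low regularity of $u$ — a mollification in space, as used elsewhere in the paper, takes care of this.
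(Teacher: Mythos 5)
Your proof is correct in outline but takes a genuinely different route from the paper, most noticeably in the converse direction \eqref{eq:wkeqnforu} $\Rightarrow$ \eqref{eq:fixedptforu}. The paper sets $\Theta = u - e^{t\Delta}g - \int_0^t e^{(t-s)\Delta}f(s)\,ds$, observes that $\Theta$ satisfies the homogeneous heat equation in the sense of distributions with $\Theta(0,\cdot)=0$, invokes hypoellipticity of $\partial_t - \Delta$ to conclude $\Theta$ is a classical solution, and then uses the maximum principle for the heat equation in unbounded domains (this is where the at-most-linear-growth hypothesis is spent) to conclude $\Theta \equiv 0$. You instead extract the mild formula directly by testing against $\phi(t,x)=\eta_\delta(t)\,\bigl(e^{(t_0-t)\Delta}\chi\bigr)(x)$, which is annihilated by $\partial_t + \partial_{xx}$ except for the $\eta_\delta'$ term, and sending $\delta\to 0$. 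This avoids both hypoellipticity and the uniqueness theorem for the heat equation, which makes the argument more self-contained and elementary in spirit. The price, as you note yourself, is that your $\phi$ is not compactly supported in $x$, whereas the weak formulation is only stipulated against $C_c^\infty([0,\infty)\times\R)$; one must insert a spatial cutoff $\psi_R(x)$, show the error terms involving $u\,\partial_{xx}(\psi_R) e^{(t_0-t)\Delta}\chi$, $u\,\partial_x\psi_R\,\partial_x e^{(t_0-t)\Delta}\chi$, and $f\,\psi_R\,\eta_\delta v$ vanish as $R\to\infty$ using the Gaussian decay of $e^{(t_0-t)\Delta}\chi$ against the linear growth of $u,f$, and only then pass $\delta\to 0$. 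That enlargement of the admissible test function class is exactly the technical content that replaces the paper's maximum-principle step and should be carried out explicitly. For the forward implication the two arguments are essentially the same in substance: the paper reduces to smooth compactly supported $f,g$ (where the mild formula gives a classical solution and integration by parts is immediate) and then mollifies/cuts off; your Fubini-plus-self-adjointness computation is simply a more explicit rendering of the same calculation, and your remark about mollifying $u$ in space is unnecessary in this direction since $u$ is \emph{defined} by the mild formula and no derivatives of $u$ are taken — the derivatives all land on $\phi$.
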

\begin{rem}
  Although the assumption of linear growth is not optimal, it will be sufficient for our application. 
  Implicit here is a uniqueness theorem for weak solutions of the heat equation with at most linear growth. 
  Recall that even classical solutions fail to be unique without certain growth conditions at $\abs{x}=\infty$ (see e.g. \cite[Ch. 7]{JohnPDE}). 
\end{rem}

\begin{proof}
That $u$ satisfies \eqref{eq:wkeqnforu} if
it satisfies \eqref{eq:fixedptforu} is clear
in the case that $f,g$ are smooth and compactly supported. Then,
a cutoff and mollification argument upgrades the result to the given class. 

In the other direction, suppose that $u$ satisfies \eqref{eq:wkeqnforu}. Define the function 
\[
\Theta\left(t,x\right)=u\left(t,x\right)-\left[e^{t\Delta}g\left(\cdot\right)\right]\left(x\right)-\int_{0}^{t}\left[e^{\left(t-s\right)\Delta}f\left(s,\cdot\right)\right]\left(x\right)\,ds,
\]
which is continuous and satisfies $\Theta\left(0,\cdot\right)=0$. By a similar argument
as above, $\Theta$ satisfies the heat equation in the sense of distributions
on $\R_{+}\times\R$. Since the heat operator is hypoelliptic, it follows that $\Theta$ is a classical solution \cite{FollandPDE}. 
By its definition, $\Theta$ grows at most linearly
since the same is true for $u$, $f$, and $g$. By the maximum
principle for the heat equation in unbounded domains \cite{JohnPDE}, we conclude that $\Theta =0$.
\end{proof}
\bibliographystyle{plain}
\bibliography{../../parisimeasures}
\end{document}